\newtheorem{theorem}{Theorem}
\newtheorem{corollary}[theorem]{Corollary}
\newtheorem{lemma}[theorem]{Lemma}
\newtheorem*{question}{Question}
\theoremstyle{definition}
\theoremstyle{remark}
\newtheorem*{example}{Example}
\newtheorem*{remark}{Remark}
\def\e{\epsilon}
\def\Z{\mathbb{Z}}
\def\F{\mathbb{F}}
\def\a{\alpha}
\def\be{\beta}
\def\g{\gamma}
\def\d{\delta}
\def\codim{\mathop{\mathrm{codim}}}
\title{The slice rank of a direct sum}
\date{}
\begin{document}

\maketitle

\abstract{We show that the slice rank of the direct sum of two tensors is equal to the sum of their slice ranks. This result generalizes the fact, shown by Tao, that the slice rank of a diagonal tensor is equal to the number of non-zero entries of that tensor. The proof uses the duality method of Sawin and Tao in a straightforward way.}

\section{Introduction}

By a $d$-\emph{tensor} over a field $\F$, we shall mean a function of the form $T:X_1\times\dots\times X_d\to\mathbb F$, where $X_1,\dots,X_d$ are finite sets. When $d=2$, we can think of $T$ as an $|X_1|\times|X_2|$ matrix, and an important invariant associated with it is its rank. It is natural to try to generalize the notion of rank to higher-order tensors, but it turns out that there are several competing generalizations, each with different advantages and disadvantages, more than one of which is genuinely useful. 

If $u_i:X_i\to\F$ for $i=1,\dots,d$, write $u_1\otimes\dots\otimes u_d$ for the tensor $T$ given by 
\[T(x_1,\dots,x_d)=u_1(x_1)u_2(x_2)\dots u_d(x_d).\]
Tensors of this form are said to have tensor rank equal to 1. Then the \emph{tensor rank} of $T$ is the smallest $r$ such $T$ is a sum of $r$ tensors of tensor rank 1. Note that when $d=2$ this definition is one of the standard ways of defining the rank of a matrix. 

A second definition of rank can be obtained by changing what we count as a rank-1 tensor. Let us say that a tensor has partition rank 1 if there is a partition of $\{1,\dots,d\}$ into non-empty sets $S_1$ and $S_2$ and $T$ splits up as a product $T=T_1T_2$, where each $T_i$ depends only on the variables $x_j$ such that $j\in S_i$. Note that for $d\geq 2$ a tensor of tensor rank 1 has partition rank 1 and that any partition of $\{1,\dots,d\}$ into two disjoint sets can be used. In general, the \emph{partition rank} of a tensor $T$ is the smallest $r$ such that $T$ is a sum of $r$ tensors of partition rank 1.

An intermediate definition is that of slice rank. Here, the tensors of rank 1 are defined as for partition rank except that we insist that $S_1$ is a singleton. So for instance if $d=4$, then a tensor of the form $u(x_1,x_2)v(x_3,x_4)$ has partition rank 1 but does not necessarily have slice rank 1, whereas a tensor of the form $u(x_3)v(x_1,x_2,x_4)$ has slice rank 1 and partition rank 1. As one would expect, the \emph{slice rank} of a tensor $T$ is the smallest $r$ such that $T$ is a sum of $r$ tensors of slice rank 1. 

Since a tensor of tensor rank 1 has slice rank 1 and a tensor of slice rank 1 has partition rank 1, we find that the tensor rank is at least as big as the slice rank, which is at least as big as the partition rank.

In a remarkable and very quick sequence of developments in 2016, Croot, Lev and Pach proved that subsets of $\Z_4^n$ that do not contain an arithmetic progression of length 3 have exponentially small density \cite{crootlevpach}, and then Ellenberg and Gijswijt proved the same for subsets of $\F_3^n$, thereby solving the famous cap-set problem in additive combinatorics \cite{ellenberggijswijt}. Soon after that, Tao gave a more conceptual reformulation of the argument \cite{tao}, in which the following lemma (in the case $d=3$) played a crucial role.

\begin{lemma}[Tao]\label{diagonal} Let $T:X^d\to\F$ be a $d$-tensor and suppose that $T(x_1,\dots,x_d)=0$ except if $x_1=x_2=\dots=x_d$. Then the slice rank of $T$ is equal to the number of non-zero entries of $T$. 
\end{lemma}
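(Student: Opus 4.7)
The plan is to prove the two inequalities separately. The upper bound is easy: writing $c_x = T(x,\dots,x)$ and letting $D \subseteq X$ be the set of coordinates with $c_x \ne 0$, we have
\[
T(x_1,\dots,x_d) = \sum_{x \in D} c_x\, \delta_x(x_1)\,\delta_x(x_2)\cdots\delta_x(x_d),
\]
which exhibits $T$ as a sum of $|D|$ slice-rank-$1$ tensors (take $\delta_x(x_1)$ as the singleton factor and the product of the remaining $\delta_x(x_i)$ as its companion). Hence the slice rank of $T$ is at most $|D|$.

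For the lower bound I would argue by induction on $d$, with base case $d = 2$ given by the standard fact that the rank of a diagonal matrix equals its number of non-zero entries. For the inductive step, assume the lemma for $(d-1)$-tensors and let $T$ admit a slice-rank decomposition
\[
T = \sum_{i=1}^{d} \sum_{j \in S_i} u_{i,j}(x_i)\, V_{i,j}(x_1,\dots,\hat{x}_i,\dots,x_d)
\]
of total length $r = |S_1| + \dots + |S_d|$. The idea is to contract $T$ in the first variable against a function $f : X \to \F$ annihilating the $i = 1$ block, i.e.\ $\sum_{x_1} f(x_1) u_{1,j}(x_1) = 0$ for every $j \in S_1$. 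Setting $T'(x_2,\dots,x_d) := \sum_{x_1} f(x_1) T(x_1,x_2,\dots,x_d)$, the diagonal support of $T$ forces $T'$ to be a diagonal $(d-1)$-tensor with entry $f(x) c_x$ at $(x,\dots,x)$, while each remaining term (with $i \ge 2$) collapses to a rank-$1$ slice in the $(d-1)$-tensor sense, giving that the slice rank of $T'$ is at most $r - |S_1|$.

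To close the induction I need $f$ with large support inside $D$. Suppose, for contradiction, that $r < |D| = n$. The orthogonality constraints carve out a subspace $V \subseteq \F^D$ of dimension at least $n - |S_1|$, and the key fact I need is that any $k$-dimensional subspace of $\F^D$ contains a vector with support of size at least $k$. This follows by picking a basis in reduced row echelon form and summing the basis vectors, producing a vector with a $1$ in each of the $k$ pivot coordinates. Applied to $V$, this yields an $f$ with $|\mathrm{supp}(f)| \ge n - |S_1| > r - |S_1|$, so $T'$ is a diagonal $(d-1)$-tensor with more than $r - |S_1|$ non-zero diagonal entries but slice rank at most $r - |S_1|$, contradicting the inductive hypothesis.

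The main obstacle is precisely this last support bound. Picking any non-zero $f$ orthogonal to the $u_{1,j}$ is not enough on its own, since it gives no control over how many diagonal entries of $T'$ survive and the induction then fails to kick in. Converting the dimension count for $V$ into a pointwise support bound on a single element of $V$ is the crucial step that turns the slice-rank decomposition into a genuine contradiction; everything else is bookkeeping.
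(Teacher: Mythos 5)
Your proof is correct and follows essentially the same route as the paper's sketch of Tao's argument: contract $T$ in one variable against a function that annihilates the corresponding block of the decomposition and has large support, then observe that the resulting diagonal $(d-1)$-tensor has too many non-zero entries for its slice rank. Your RREF argument correctly supplies the "simple linear algebra" step the paper leaves implicit, and setting the argument up as an induction on $d$ is the natural way to extend the paper's $d=3$ sketch.
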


We briefly sketch his proof in the case $d=3$. Suppose that one has a decomposition
\[T(x,y,z)=\sum_{i=1}^ra_i(x)b_i(y,z)+\sum_{j=1}^sc_j(y)d_j(x,z)+\sum_{k=1}^te_k(z)f_k(x,y).\]
Then a simple linear algebra argument shows that there is a function $h:X\to\F$ such that $\sum_xh(x)a_i(x)=0$ for $i=1,\dots,r$ and such that $h(x)=0$ for at most $r$ values of $x$. Take such an $h$ and consider the matrix $M(y,z)=\sum_xh(x)T(x,y,z)$. Then $M$ is diagonal, and $M(y,y)=h(y)T(y,y,y)$. If the number of non-zero entries of $T$ is $m$, then the number of non-zero entries of $M$ is at least $m-r$, so $M$ has rank at least $m-r$.

On the other hand, $M$ has a decomposition
\[M(y,z)=\sum_{j=1}^sc_j(y)u_j(z)+\sum_{k=1}^tv_k(y)e_k(z),\]
where $u_j(z)=\sum_xh(x)d_j(x,z)$ and $v_k(y)=\sum_xh(x)f_k(x,y)$ for each $j,k$. It follows that $M$ has rank at most $s+t$. 

Putting these two estimates together, we deduce that $m-r\leq s+t$. Since the initial decomposition of $T$ was arbitrary, this proves that the slice rank of $T$ is at least $m$, as we wanted. 
\smallskip

In this paper, we shall prove the following result. Suppose we have finite sets $X_1,\dots,X_d$ and for each $i$ let $X_i=X_i^1\cup X_i^2$, where this is a disjoint union. 
Given two tensors $T_i:X_1^i\times\dots\times X_d^i\to\F$, $i=1,2$, their \emph{direct sum} $T_1\oplus T_2$ is the tensor that takes the value $T_1(x_1,\dots,x_d)$ if $x_i\in X_i^1$ for each $i$, $T_2(x_1,\dots,x_d)$ if $x_i\in X_i^2$ for each $i$, and 0 otherwise. 

Let us write $\sigma(T)$ for the slice rank of $T$. 

\begin{theorem}\label{main} For any two tensors, we have $\sigma(T_1\oplus T_2)=\sigma(T_1)+\sigma(T_2)$.
\end{theorem}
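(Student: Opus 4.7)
The easy inequality $\sigma(T_1 \oplus T_2) \leq \sigma(T_1) + \sigma(T_2)$ follows by concatenating optimal slice-rank decompositions of $T_1$ and $T_2$, extending each direction-$i$ factor by zero from $X_i^k$ to all of $X_i$. For the reverse inequality I plan to use the Sawin--Tao duality, which recasts slice rank as
\[\sigma(T) = \sum_{i=1}^d |X_i| - \max_{(U_i)} \sum_{i=1}^d \dim U_i,\]
where the maximum runs over tuples $(U_i)$ of subspaces $U_i \subseteq \F^{X_i}$ satisfying $\langle T, u_1 \otimes \cdots \otimes u_d \rangle = 0$ for every $u_i \in U_i$. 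Since $T = T_1 \oplus T_2$ vanishes on each mixed cell $\prod_i X_i^{\epsilon_i}$ with $\epsilon \in \{1,2\}^d$ non-constant, decomposing $u_i = u_i^{(1)} + u_i^{(2)}$ with $u_i^{(k)} \in \F^{X_i^k}$ reduces the annihilation condition to the coupled identity
\[\langle T_1, u_1^{(1)} \otimes \cdots \otimes u_d^{(1)} \rangle + \langle T_2, u_1^{(2)} \otimes \cdots \otimes u_d^{(2)} \rangle = 0.\]

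Given any $(U_i)$ satisfying this, I aim to build pure subspaces $V_i^{(k)} \subseteq \F^{X_i^k}$ such that $T_k$ annihilates $\bigotimes_i V_i^{(k)}$ and $\dim V_i^{(1)} + \dim V_i^{(2)} = \dim U_i$ for every $i$; applying duality to $T_1$ and $T_2$ separately then yields $\sigma(T_1) + \sigma(T_2) \leq \sum_i |X_i| - \sum_i \dim U_i$, and maximising over $(U_i)$ finishes the argument. The construction has to be asymmetric in $i$: I would take $V_1^{(1)} = U_1 \cap \F^{X_1^1}$, $V_1^{(2)} = P_1^{(2)}(U_1)$, and for $i \geq 2$ take $V_i^{(1)} = P_i^{(1)}(U_i)$, $V_i^{(2)} = U_i \cap \F^{X_i^2}$, where $P_i^{(k)} \colon \F^{X_i} \to \F^{X_i^k}$ is the coordinate projection. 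The dimension identity is then immediate from rank--nullity applied to the relevant projection restricted to $U_i$. To check that $T_1$ annihilates $\bigotimes_i V_i^{(1)}$, I would lift each $v_i$ to some $u_i \in U_i$ with $P_i^{(1)}(u_i) = v_i$, specifically taking $u_1 = v_1 \in \F^{X_1^1}$ so that $u_1^{(2)} = 0$; the $T_2$ contribution to the coupled identity then vanishes from the zero factor, leaving $\langle T_1, \bigotimes_i v_i \rangle = 0$. The $T_2$ verification is symmetric, using the lift $u_i = v_i \in \F^{X_i^2}$ for $i \geq 2$ to kill the $T_1$ term.

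The main obstacle is finding this asymmetric recipe in the first place. The natural symmetric alternatives---either $V_i^{(k)} = U_i \cap \F^{X_i^k}$ or $V_i^{(k)} = P_i^{(k)}(U_i)$ for every $i$ and $k$---give dimension totals that are respectively too small and too big by exactly the ``mixed'' part of each $U_i$, so neither produces the needed equality. Only the crossed version, with intersections on one side and projections on the other and the side flipped for at least one coordinate (which is where $d \geq 2$ enters), simultaneously delivers the correct dimension sum and supplies a zero factor that kills the cross term in the coupled annihilation identity.
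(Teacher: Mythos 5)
Your proposal is correct and follows essentially the same route as the paper: the Sawin--Tao duality characterisation of slice rank, followed by splitting each annihilating subspace $U_i$ into a subspace supported on $X_i^1$ and one supported on $X_i^2$ whose dimensions sum to $\dim U_i$, with the roles of intersection and projection crossed for at least one coordinate so that a zero factor kills the cross term. Your direct intersection/projection description of the split is precisely what the paper's Gaussian-elimination (echelon basis) construction produces, just phrased more cleanly.
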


\noindent Note that this immediately implies that $\sigma(T_1\oplus\dots\oplus T_m)=\sigma(T_1)+\dots+\sigma(T_m)$ (where the definition of $T_1\oplus\dots\oplus T_m$ is obvious), and hence Tao's lemma, which is the special case where each $T_i$ is a $1\times\dots\times 1$ tensor. 

To prove the theorem, it is tempting to try to modify Tao's argument, but the following example, with $d=3$, seems to indicate that that cannot be done straightforwardly.

\begin{example}
Let $\epsilon$ be the $3\times 3\times 3$ Levi-Civita symbol. That is, it is defined on $\{1,2,3\}^3$, and $\epsilon(x,y,z)=0$ if any two of $x,y,z$ are equal, and otherwise $\epsilon(x,y,z)=1$ if $(x,y,z)$ is an even permutation of $(1,2,3)$ and $-1$ if it is an odd permutation. (It would more normally be written $\epsilon_{ijk}$, but we write it $\e(x,y,z)$ for consistency with our earlier notation.) This tensor is supported on an antichain, meaning that if $x\leq x', y\leq y', z\leq z'$, and both $(x,y,z)$ and $(x',y',z')$ belong to the support, then $(x,y,z)=(x',y',z')$. If we define a \emph{slice} to be a subset of $\{1,2,3\}^3$ defined by holding one of the coordinates constant, then the number of slices needed to cover the support of $\e$ is 3, since each slice contains two points of the support. A result of Sawin and Tao \cite{sawintao} states that if a tensor is supported on an antichain, then its slice rank is equal to the number of slices needed to cover the support, which implies that $\e$ has slice rank~3. 

If, however, $h$ is any function from $\{1,2,3\}$ to $\F$, then the $3\times 3$ matrix
\[M(y,z)=\sum_xh(x)\e(x,y,z)\]
is antisymmetric, and therefore has rank at most 2.

To see why this is a problem, let $T=\e\oplus\dots\oplus\e$, where we take $m$ copies, and suppose we have a decomposition
\[T(x,y,z)=\sum_{i=1}^ra_i(x)b_i(y,z)+\sum_{j=1}^sc_j(y)d_j(x,z)+\sum_{k=1}^te_k(z)f_k(x,y).\]
We can find $h$ with at most $r$ zeros such that $\sum_xh(x)a_i(x)=0$ for $i=1,2,\dots,r$, and the matrix 
\[M(y,z)=\sum_xh(x)T(x,y,z)\]
has rank at most $s+t$. 

However, in the other direction all we know is that the rank of $M$ is twice the number of copies of $\e$ that are not projected to zero -- that is, twice the number of $q$ such that at least one of $h(3q-2), h(3q-1)$ and $h(3q)$ is non-zero. The number of such $q$ is at least $m-\lfloor r/3\rfloor$, but can in principle be that low. For example, if for each $i\leq r$, $a_i$ is the $i$th standard basis vector, then $h(i)$ is forced to be zero for $i=1,\dots,r$, so for $q\leq r/3$ we have that $h(3q-2), h(3q-1)$ and $h(3q)$ are all zero. So the best lower bound we can obtain in general is that $2\lfloor r/3\rfloor+s+t\geq 2m$. By symmetry we obtain similar estimates with the role of $r$ played by $s$ and $t$. But if $r,s$ and $t$ are all equal and are multiples of 3, then we find that $8(r+s+t)/9\geq 2m$, from which we can conclude only that $r+s+t\geq 9m/4$. 

Note that the result of Sawin and Tao that shows that $\sigma(\e)=3$ also shows that $\sigma(\e\oplus\dots\oplus\e)=3m$ (where there are still $m$ copies of $\e$), but there are tensors for which their method does not give optimal estimates, so this argument will only work for special cases of the problem.
\end{example}

\begin{remark}
The example just presented relied on a ``non-trivial" space of low-rank matrices, namely the $3\times 3$ antisymmetric matrices. We regard a space $Z$ of matrices of rank at most $r$ as trivial if there are spaces $U$ and $V$ of dimensions $s$ and $t$ with $s+t\leq r$ such that $Z$ is the sum of the space of matrices with rows in $U$ and the space of matrices with columns in $V$. It is not a straightforward problem to understand spaces of low-rank matrices in general. See for example a paper of Eisenbud and Harris \cite{eisenbudharris}, which was what led us to think of the example above, and which can probably be used to construct other examples of a similar type.
\end{remark}

An earlier version of this note contained a more complicated argument. I would like to thank Thomas Karam for pointing out that certain parts of that argument were imprecise to the point of not being obviously correct. Although it turned out that the argument could be rescued in the case $d=3$ (and probably also for general $d$ but that is trickier), during subsequent conversations with Thomas Karam a simpler proof emerged, after which it became clear that the result could in fact be proved using a simple modification of the argument of Sawin and Tao just mentioned, a possibility that I had previously considered but, as a result of an incorrect heuristic argument, discounted. While this makes the result not interesting enough to publish formally, it still seems worth keeping it as an arXiv preprint, since at some point it may save somebody some time if it can be readily found online. As this document is not intended for publication, we include the modified old argument for the case $d=3$, just in case elements of the proof are of use to anyone. 

\section{Proof of Theorem \ref{main}}

For the convenience of the reader, we begin by recalling one or two facts from a blog post of Sawin and Tao \cite{sawintao}. The first is that we can think of tensors in two different ways -- either in ``matrix form" as functions $T:X_1\times\dots\times X_d\to\mathbb F$ or as elements of a tensor product $V_1\otimes\dots\otimes V_d$. Given a function $T:X_1\otimes\dots\otimes X_d\to\F$, the corresponding element of the tensor product $\F^{X_1}\otimes\dots\otimes\F^{X_d}$ is the sum
\[\sum_{x_1,\dots,x_d}T(x_1,\dots,x_d)e_{x_1}\otimes\dots\otimes e_{x_d},\]
where, given $x_i\in X_i$, the vector $e_{x_i}$ is the standard basis vector in $\F^{X_i}$ that takes the value 1 at $x_i$ and 0 everywhere else. In the other direction, given an element $\tau$ of a tensor product $V_1\otimes\dots\otimes V_d$ of finite-dimensional vector spaces, take a basis $\{e_{i1},\dots,e_{ir_i}\}$ of each $V_i$, write $\tau$ in the unique way possible as
\[\tau=\sum_{j_1,\dots,j_d}\lambda(j_1,\dots,j_d)e_{1j_1}\otimes\dots\otimes e_{dj_d},\]
let $X_i=\{1,2,\dots,r_i\}$, and set $T=\lambda$. 

In the tensor-product formulation, the slice rank of a tensor $T\in V_1\otimes\dots\otimes V_d$ is the smallest $r$ such that it is possible to write $T$ in the form 
\[\sum_{i=1}^d\sum_{j=1}^{r_i}u_{ij}\otimes v_{ij},\]
with $r_1+\dots+r_d=r$, where for each $i$, $u_{ij}\in V_i$ and $v_{ij}\in V_1\otimes\dots\otimes V_{i-1}\otimes V_{i+1}\otimes\dots\otimes V_d$. (This is a slight abuse of notation because what we are really doing is ``inserting" $u_{i,j}$ into $v_{i,j}$. More precisely, if $v$ is a pure tensor $w_1\otimes\dots\otimes w_{i-1}\otimes w_{i+1}\otimes\dots\otimes w_d$ and $u\in V_i$, then by $u\otimes v$ we mean the tensor $w_1\otimes\dots\otimes w_{i-1}\otimes u\otimes w_{i+1}\otimes\dots\otimes w_d$, and then this map can be extended linearly.) 

It is simple to check that this tensor-product definition of slice rank agrees with the definition given earlier. We shall therefore pass freely between the two, using whichever formulation is more convenient at any one moment.

\begin{lemma} Let $V_1,\dots,V_d$ be finite-dimensional vector spaces over a field $\F$ and let $T\in V_1\otimes\dots\otimes V_d$. Then $T$ has slice rank at most $r$ if and only if there exist subspaces $U_i\subset V_i^*$ with $\sum_i\codim(U_i)\leq r$ such that $\langle T,u\rangle=0$ for every $u\in U_1\otimes\dots\otimes U_d$. 
\end{lemma}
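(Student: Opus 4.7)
The plan is to prove both directions by an explicit basis calculation, exploiting the standard biduality between a finite-dimensional subspace and its annihilator in the dual space.

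For the forward direction, I would start from a slice rank decomposition $T=\sum_{i=1}^d\sum_{j=1}^{r_i}u_{ij}\otimes v_{ij}$ with $\sum_i r_i\le r$, and for each $i$ define $U_i\subset V_i^*$ to be the annihilator of $\operatorname{span}\{u_{i1},\dots,u_{i,r_i}\}\subset V_i$. Then $\codim(U_i)\le r_i$, so $\sum_i\codim(U_i)\le r$. For any pure tensor $\alpha_1\otimes\dots\otimes\alpha_d$ with $\alpha_k\in U_k$, the pairing with the summand $u_{ij}\otimes v_{ij}$ contains the factor $\langle u_{ij},\alpha_i\rangle=0$, so $\langle T,\alpha_1\otimes\dots\otimes\alpha_d\rangle=0$; linearity extends this to all of $U_1\otimes\dots\otimes U_d$.

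For the reverse direction, I would argue by choosing a suitable basis. Suppose subspaces $U_i\subset V_i^*$ with $\codim(U_i)=s_i$ and $\sum_i s_i\le r$ satisfy the vanishing condition. Let $W_i\subset V_i$ be the annihilator of $U_i$; in finite dimensions $\dim W_i=s_i$ and $U_i$ is precisely the annihilator of $W_i$. Pick a basis $e_{i,1},\dots,e_{i,s_i}$ of $W_i$, extend to a basis $e_{i,1},\dots,e_{i,n_i}$ of $V_i$, and let $e_{i,k}^*$ be the dual basis; then $U_i=\operatorname{span}\{e_{i,k}^*:k>s_i\}$. Expand $T=\sum_{j_1,\dots,j_d}t_{j_1,\dots,j_d}\,e_{1,j_1}\otimes\dots\otimes e_{d,j_d}$. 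Pairing $T$ against $e_{1,k_1}^*\otimes\dots\otimes e_{d,k_d}^*$ with every $k_i>s_i$ yields $t_{k_1,\dots,k_d}=0$. Hence every non-zero term of the expansion has at least one coordinate $j_i\le s_i$.

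I would then finish by grouping. Partition the non-zero terms according to the smallest index $i$ with $j_i\le s_i$, and within that, by the value of $j_i\in\{1,\dots,s_i\}$. For fixed $i$ and fixed $j_i$, the corresponding partial sum factors through $e_{i,j_i}$ in the $i$th slot and so has slice rank at most $1$ in the paper's sense. This exhibits $T$ as a sum of at most $s_1+\dots+s_d\le r$ slice-rank-$1$ tensors, completing the proof.

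I do not anticipate any real obstacle: the whole argument is finite-dimensional duality applied factor by factor. The only point worth handling carefully is confirming that $U_i$ really equals the annihilator of $W_i$ (so that the chosen dual basis spans $U_i$), and that the ``insertion'' convention from the paper correctly identifies the partial sums above with slice-rank-$1$ tensors; both are routine.
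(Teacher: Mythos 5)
Your proposal is correct and follows essentially the same route as the paper: the forward direction uses the annihilator of the span of the slice vectors exactly as in the paper, and your reverse direction is just a more explicit, coefficient-level writing-out of the paper's observation that expanding $T$ in bases dual to extensions of bases of the $U_i$ forces $T$ into $\sum_i V_1\otimes\dots\otimes U_i^\perp\otimes\dots\otimes V_d$. The extra care you take over the biduality $(U_i^\perp)^\perp=U_i$ and over the grouping of terms into slice-rank-one pieces is sound and fills in details the paper leaves implicit.
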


\begin{proof}
Suppose first that $T$ has slice rank at most $r$. Then we can write $T$ as a sum $\sum_{i=1}^d\sum_{j=1}^{r_i}v_{ij}\otimes w_{ij}$, where for each $i$ and $j$, $v_{ij}\in V_i$ and $w_{ij}\in V_1\otimes\dots\otimes V_{i-1}\otimes V_{i+1}\otimes\dots\otimes V_d$, and $\sum_{i=1}^dr_i\leq r$.

For each $i$ let $U_i$ be the set of all $u\in V_i^*$ such that $\langle v_{ij},u\rangle=0$ for $j=1,\dots,r_i$. Then $U_i$ is a subspace of codimension at most $r_i$. Moreover, if $u_i\in U_i$ for $i=1,\dots,d$, then $\langle\sum_{i=1}^d\sum_{j=1}^{r_i}v_{ij}\otimes w_{ij},u_1\otimes\dots\otimes u_d\rangle=0$, since for each $i,j$ we have that $\langle v_{ij},u_i\rangle=0$. Extending linearly we find that $\langle T,u\rangle=0$ for every $u\in U_1\otimes\dots\otimes U_d$, and we also have that $\sum_i\codim(U_i)\leq r$. 

In the reverse direction, suppose that such subspaces $U_i$ exist. For each $i$ choose a basis of $U_i$ and extend it to a basis of $V_i^*$. By considering the expansion of $T$ with respect to the dual bases of these bases, we see that $T$ must be contained in the subspace $\sum_{i=1}^d V_1\otimes\dots\otimes V_{i-1}\otimes U_i^\perp\otimes V_{i+1}\otimes\dots\otimes V_d$ of $V_1\otimes\dots\otimes V_d$. Since $U_i^\perp$ has a basis of size $r_i$, this yields a decomposition of $T$ of the required form. 
\end{proof}

\begin{proof}[Proof of Theorem \ref{main}]
Let $V_1,\dots,V_d$ be finite-dimensional vector spaces with $V_i=V_i^1\oplus V_i^2$, and let $T=T_1+T_2$, where $T^1\in V_1^1\otimes\dots\otimes V_d^1$ and $T^2\in V_1^2\otimes\dots\otimes V_d^2$. We would like to show that $\sigma(T_1)+\sigma(T_2)\leq\sigma(T)$, the reverse inequality being trivial. 

Let $r=\sigma(T)$ and choose subspaces $U_i\subset V_i^*$ with $\codim(U_i)=r_i$ and $\sum_ir_i=r$ such that $\langle T,u\rangle=0$ for every $u\in U_1\otimes\dots\otimes U_d$.  

For each $i$, choose a basis $v_{i1},\dots,v_{in_i}$ of $V_i$ that starts with a basis of $V_i^1$ and ends with a basis of $V_i^2$. Let $v_{i1}^*,\dots,v_{in_i}^*$ be the dual basis, and let $u_{i1},\dots,u_{im_i}$ be a basis of $U_i$, where $m_i=n_i-r_i$. Each $u_{ij}$ can be expanded in terms of the dual basis. Let us write $u_{ij}(h)$ for the $h$th coefficient of $u_{ij}$ with respect to this basis: that is,
\[u_{ij}=\sum_{h=1}^{n_i}u_{ij}(h)v_{ih}.\] 
By applying Gaussian elimination, we may assume for any given $i$ that the first $h$ for which $u_{ij}(h)$ is non-zero is a strictly increasing function of $j$. Alternatively, we may assume for any given $i$ that the last $h$ for which $u_{ij}(h)$ is non-zero is a strictly decreasing function of $i$. (That is, for each $i$ we may assume one or the other of these two statements: we do not claim that both can be assumed at once.)

Suppose that for a particular $i$ we have chosen the first option: that is, the first $h$ for which $u_{ij}(h)\ne 0$ is strictly increasing with $j$. If $\dim(V_i^1)=s_i$, then for every $j$ such that the first such $h$ is greater than $s_i$, we have that $u_{ij}$ vanishes on $V_i^1$. We now define a sequence $w_{i1},\dots,w_{im_i}$ as follows. For every $j$ such that the first $h$ is less than $s_i$, we let $w_{ij}$ be the projection of $u_{ij}$ on to the first $s_i$ coordinates, and note that $w_{ij}$ and $u_{ij}$ agree on $V_i^1$. Let the number of such $j$ be $k_i$. For every $j>k_i$, we let $w_{ij}=u_{ij}$, and as just mentioned we have that $w_{ij}$ vanishes on $V_i^1$.

Similarly, if we have chosen the second option, then we can define a sequence $w_{i1},\dots,w_{im_i}$ and $k_i$ such that for $j\leq k_i$ we have that $w_{ij}$ vanishes on $V_i^2$ and for $j>k_i$ we have that $w_{ij}$ agrees with $u_{ij}$ on $V_i^2$. 

In both cases we start with the vectors $u_{i1},\dots,u_{im_i}$ and obtain a sequence $w_{i1},\dots,w_{im_i}$ and some $k_i$ such that $w_{i1},\dots,w_{ik_i}\in (V_i^1)^*$ and $w_{i,k_i+1},\dots,w_{im_i}\in (V_i^2)^*$. For each $i$ let $U_i^1$ be the span of $w_{i1},\dots,w_{ik_i}$ and let $U_i^2$ be the span of $w_{i,k_i+1},\dots,w_{im_i}$. Since $\dim(U_i^1)+\dim(U_i^2)=m_i$, we have that $\codim(U_i^1)+\codim(U_i^2)=n_i-m_i=r_i$. (Here by the codimension of $U_i^1$ we mean its codimension as a subspace of $(V_i^1)^*$, and similarly for $U_i^2$.) 

Assume now that there exists $i_0$ such that the second option is chosen. We claim that if $u\in U_1^1\otimes\dots\otimes U_d^1$, then $\langle T^1,u\rangle=0$. It is enough to prove this when $u=u_1\otimes\dots\otimes u_d$ with $u_i\in U_i^1$. Furthermore, it is enough to prove it when each $u_i$ is equal to $w_{ij}$ for some $j\leq k_i$. 

If $i$ is such that the first option is chosen, and $j\leq k_i$, then $w_{ij}$ agrees with $u_{ij}$ on $V_i^1$. If $i$ is such that the second option is chosen, and $j\leq k_i$, then $w_{ij}=u_{ij}$ and therefore also agrees with $u_{ij}$ on $V_i^1$. Since $T^1\in V_1^1\otimes\dots\otimes V_d^1$, it follows that $\langle T^1,u\rangle$ does not change if we replace each $w_{ij}$ by $u_{ij}$. But if each $u_i$ is one of the vectors $u_{ij}$, then $\langle T,u\rangle=0$, by hypothesis. Also, since the second option is chosen for $i_0$, $u_{i_0}$ vanishes on $V_{i_0}^2$. It follows that $\langle T^2,u\rangle=0$, and therefore that $\langle T^1,u\rangle=0$. 

Similarly, if $u\in U_1^2\otimes\dots\otimes U_d^2$ and we choose the first option for at least one $i$, then $\langle T^2,u\rangle=0$.

Since $d\geq 2$, we can choose the first option for at least one $i$ and the second option for at least one $i$, so the result is proved.
\end{proof}

An examination of the above argument shows that it can be used to prove stronger statements as well. Suppose, for instance, that $T$ is of the form $T^1+T^2$ where $T^1$, as before, belongs to $V_1^1\otimes\dots\otimes V_d^1$, but all we assume about $T^2$ is that it belongs to $V_1\otimes\dots\otimes V_{d-1}\otimes V_d^2$. We now run the proof, choosing the first option for $i=1,2,\dots,d-1$ and the second option for $i=d$. 

Suppose that $w=w_1\otimes\dots\otimes w_d$, where each $w_i$ is equal to $w_{ij}$ for some $j\leq k_i$. For $i=1,2,3,\dots,d-1$ let us replace $w_i$ by some $u_i\in U_i^1$ that agrees with $w_i$ on $V_i^1$. Letting $u=u_1\otimes\dots\otimes u_{d-1}\otimes w_d$, we then have that $\langle T^1,w\rangle=\langle T^1,u\rangle$. Because we chose the second option for $i=d$, $w_d$ vanishes on $V_d^2$, and therefore $\langle T^2,u\rangle=0$. It follows that $\langle T^1,u\rangle=\langle T,u\rangle=0$, where the last equality holds by hypothesis.

Now suppose that $w=w_1\otimes\dots\otimes w_d$, where this time each $w_i$ is equal to $w_{ij}$ for some $j>k_i$. Then there exists $u_d\in U_d^2$ that agrees with $w_d$ on $V_d^2$, while for $i=1,2,\dots,d-1$ we have that $w_i$ vanishes on $V_i^1$. Let $u=w_1\otimes\dots\otimes w_{d-1}\otimes u_d\in U_1^2\otimes\dots\otimes U_d^2$ and note that $\langle T^1,u\rangle=0$. 

Given $\alpha\in\{1,2\}^d$, let $T^\a$ stand for the projection of $T$ to $V_1^{\a_1}\otimes\dots\otimes V_d^{\a_d}$. (To be more explicit, given $v_i\in V_i$ we can write it uniquely as $v_i^1+v_i^2$ with $v_i^1\in V_i^1$ and $v_i^2\in V_i^2$. This allows us to decompose $v_1\otimes\dots\otimes v_d\in V_1\otimes\dots\otimes V_d$ into $2^d$ parts $v_1^{\a_1}\otimes\dots\otimes v_d^{\a_d}$, one for each $\a$.) Then $T=\sum_\a T^\a$. 

If any of $\a_1,\dots,\a_{d-1}$ is equal to $1$, then because $w_i$ vanishes on $V_i^1$ for $i\leq d-1$, we have that $\langle T^{\a},w\rangle=0$. Also, when $\a=(2,2,\dots,2,1)$, we have that $T^\a=0$. It follows that 
\[\langle T^{22\dots 2},w\rangle=\langle T^2,w\rangle=\langle T^2,u\rangle=\langle T,u\rangle=0,\]
where again the last equality holds by hypothesis. 

This proves the following statement.

\begin{theorem}\label{general}
Let $V_1,\dots,V_d$ be finite-dimensional vector spaces with $V_i=V_i^1\oplus V_i^2$ for each $i$. Let $T\in V_1\otimes\dots\otimes V_d$ and suppose that the component $T^\a$ (see just above for the definition) is zero unless either $\a_d=2$ or $\a_1=\dots=\a_d=1$. Then 
\[\sigma(T)\geq\sigma(T^{11\dots 1})+\sigma(T^{22\dots 2}).\]
\end{theorem}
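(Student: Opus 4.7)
The plan is to adapt the duality-based proof of Theorem~\ref{main}, breaking the symmetry between the two Gaussian-elimination options so that it matches the asymmetry in the hypothesis on $T$. Specifically, I would use the first option for coordinates $i=1,\dots,d-1$ and the second option for $i=d$, and then check that both $T^{11\dots 1}$ and $T^{22\dots 2}$ are annihilated by the corresponding tensor-product subspaces.

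First, I would apply the duality lemma to $T$ to extract subspaces $U_i\subset V_i^*$ with $\sum_i\codim(U_i)=\sigma(T)$ and $\langle T,u\rangle=0$ on $U_1\otimes\dots\otimes U_d$. Then I would run the same Gaussian-elimination procedure as in the proof of Theorem~\ref{main}, using option~1 for $i<d$ and option~2 for $i=d$. This yields vectors $w_{ij}$, a threshold $k_i$ for each $i$, and subspaces $U_i^1\subset(V_i^1)^*$, $U_i^2\subset(V_i^2)^*$ with $\codim(U_i^1)+\codim(U_i^2)=r_i$. It then suffices to show that $\langle T^{11\dots 1},\cdot\rangle$ vanishes on $U_1^1\otimes\dots\otimes U_d^1$ and $\langle T^{22\dots 2},\cdot\rangle$ vanishes on $U_1^2\otimes\dots\otimes U_d^2$; applying the duality lemma to each factor then gives $\sigma(T^{11\dots 1})+\sigma(T^{22\dots 2})\leq\sum_i r_i=\sigma(T)$.

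The first vanishing goes essentially as in Theorem~\ref{main}: given $w=w_1\otimes\dots\otimes w_d$ with each $w_i=w_{ij}$, $j\leq k_i$, I would lift each $w_i$ for $i<d$ to $u_i=u_{ij}\in U_i$ (which agrees with $w_i$ on $V_i^1$ by option~1), and take $u_d=w_{dj}=u_{dj}\in U_d$ itself (legitimate because option~2 was used at $i=d$). Since $u_d$ vanishes on $V_d^2$, the only components of $T$ contributing to $\langle T,u\rangle=0$ have $\a_d=1$, and by hypothesis only $T^{11\dots 1}$ survives. The second vanishing is the crux: given $w=w_1\otimes\dots\otimes w_d$ with each $w_i=w_{ij}$, $j>k_i$, option~1 at $i<d$ ensures $w_i=u_{ij}\in U_i$ and that $w_i$ vanishes on $V_i^1$, while at $i=d$ one lifts $w_d$ to some $u_d\in U_d$ agreeing with $w_d$ on $V_d^2$. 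Then $u=w_1\otimes\dots\otimes w_{d-1}\otimes u_d$ lies in $U_1\otimes\dots\otimes U_d$, so $\langle T,u\rangle=0$. Decomposing $T=\sum_\a T^\a$, any $\a$ with some $\a_i=1$ for $i<d$ drops out because $w_i$ vanishes on $V_i^1$, leaving only $T^{22\dots 21}$ and $T^{22\dots 22}$; the hypothesis kills the former, and agreement of $u_d$ with $w_d$ on $V_d^2$ yields $\langle T^{22\dots 2},w\rangle=0$.

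The main obstacle is calibrating the hypothesis to the asymmetric option choice: the only component surviving the vanishing in the third step that is \emph{not} of the form $\a_d=2$ is $T^{22\dots 21}$, and the hypothesis has been engineered precisely to annihilate it. A different pattern of permitted nonzero components would demand a different asymmetric distribution of first/second options; the $(d-1)$-to-$1$ split used here is what matches the stated hypothesis cleanly.
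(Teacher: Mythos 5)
Your proposal is correct and follows essentially the same route as the paper: the paper proves Theorem \ref{general} by running the proof of Theorem \ref{main} with the first option for $i=1,\dots,d-1$ and the second for $i=d$, then checking the two vanishing statements exactly as you do, with the hypothesis used precisely to kill the component $T^{22\dots 21}$.
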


Note that the conditions of this theorem are satisfied in particular if $T^\a$ is non-zero only for increasing sequences $\a$. This gives us a simple corollary about ``block upper triangular" tensors. Here we let $T\in V_1\otimes\dots\otimes V_d$ as before, but this time $V_i=V_i^1\oplus\dots\oplus V_i^k$ for some $k$. We call a tensor block upper triangular (with respect to the given decompositions) if the component $T^\a$ (defined in the obvious way for each $\a\in[k]^d$) is non-zero only for increasing sequences $\a$. 

\begin{corollary}\label{triangular}
Let $V_1,\dots,V_d$ be as above and let $T\in V_1\otimes\dots\otimes V_d$ be upper triangular. Then 
\[\sigma(T)\geq\sigma(T^{11\dots 1})+\dots+\sigma(T^{kk\dots k}).\]
\end{corollary}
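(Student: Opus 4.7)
The plan is to deduce Corollary \ref{triangular} from Theorem \ref{general} by induction on $k$, grouping the first block against the union of the remaining blocks.

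For the base case $k=2$, a block upper triangular tensor has $T^\alpha$ non-zero only for non-decreasing $\alpha \in \{1,2\}^d$, and any such $\alpha$ either equals $(1,\dots,1)$ or ends in a $2$. So the hypothesis of Theorem \ref{general} is met and the conclusion is exactly the statement.

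For the inductive step, set $W_i^1 := V_i^1$ and $W_i^2 := V_i^2 \oplus \dots \oplus V_i^k$, giving a $2$-block decomposition $V_i = W_i^1 \oplus W_i^2$. I would first verify that $T$ satisfies the hypothesis of Theorem \ref{general} with respect to this coarser decomposition. Writing $\tilde T^\beta$ for the components with respect to the $W$-decomposition and $T^\alpha$ for those with respect to the original $V$-decomposition, $\tilde T^\beta$ is obtained by summing $T^\alpha$ over all $\alpha \in [k]^d$ that are compatible with $\beta$ in the sense that $\alpha_i = 1$ iff $\beta_i = 1$. If $\beta \neq (1,\dots,1)$ and $\beta_d = 1$, then some index $i < d$ has $\beta_i = 2$, which forces any compatible $\alpha$ to satisfy $\alpha_i \geq 2 > 1 = \alpha_d$, violating monotonicity; hence no such $\alpha$ contributes and $\tilde T^\beta = 0$. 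Thus Theorem \ref{general} applies and gives
\[
\sigma(T) \geq \sigma(\tilde T^{11\dots 1}) + \sigma(\tilde T^{22\dots 2}).
\]
Now $\tilde T^{11\dots 1} = T^{11\dots 1}$ (the two notions of the all-ones block agree), while $\tilde T^{22\dots 2}$ is the sum of all $T^\alpha$ with every $\alpha_i \geq 2$. Viewed as a tensor in $W_1^2 \otimes \dots \otimes W_d^2$ with the decomposition $W_i^2 = V_i^2 \oplus \dots \oplus V_i^k$ into $k-1$ blocks, it is again block upper triangular, and its diagonal blocks are precisely $T^{22\dots 2}, \dots, T^{kk\dots k}$.

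Applying the inductive hypothesis to $\tilde T^{22\dots 2}$ yields $\sigma(\tilde T^{22\dots 2}) \geq \sigma(T^{22\dots 2}) + \dots + \sigma(T^{kk\dots k})$, and combining this with the previous inequality gives the corollary. The only step requiring any care is the bookkeeping to check that the coarse $2$-block decomposition inherits the required vanishing from the original triangularity hypothesis; once that is observed, everything else is formal.
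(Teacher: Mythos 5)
Your proof is correct and is essentially the paper's argument: both induct on $k$ by coarsening to a two-block decomposition and invoking Theorem \ref{general}, the only difference being that you split off the first block $V_i^1$ and apply the inductive hypothesis to $V_i^2\oplus\dots\oplus V_i^k$, whereas the paper splits off the last block $V_i^k$ and inducts on $V_i^1\oplus\dots\oplus V_i^{k-1}$. Your verification that the coarsened components vanish except when $\beta_d=2$ or $\beta=(1,\dots,1)$ is exactly the check needed, and it is carried out correctly.
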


\begin{proof}
For each $i$ let $W_i^1=V_i^1\oplus\dots\oplus V_i^{k-1}$ and let $W_i^2=V_i^k$. Then $V_i=W_i^1\oplus W_i^2$. For $\a\in\{1,2\}^d$ let $S^\a$ be the component of $T$ in $W_1^{\a_1}\otimes\dots\otimes W_d^{\a_d}$. Then $T$ is block upper triangular with respect to the decompositions $V_i=W_i^1\oplus W_i^2$, from which it follows, using the theorem just proved, that $\sigma(T)\geq\sigma(S^{11\dots 1})+\sigma(S^{22\dots 2})$. 

But $S^{22\dots 2}=T^{22\dots 2}$, and $S^{11\dots 1}\in W_1^1\otimes\dots\otimes W_d^1$ is block upper triangular with respect to the decompositions $W_i^1=V_i^1\oplus\dots\oplus V_i^{k-1}$. By induction on $k$ we have that 
\[\sigma(S^{11\dots 1})\geq\sigma(T^{11\dots 1})+\dots+\sigma(T^{k-1,k-1,\dots,k-1}),\]
and the proof is complete.
\end{proof}

\iftrue
\else
\section{An alternative proof of Theorem \ref{main} when $d=3$}

There seems no harm in including the argument mentioned earlier that works when $d=3$, even though it is a little more complicated, as the lemmas along the way may be of some interest.

We begin with a lemma that will allow us to assume that various functions are linearly independent.

\begin{lemma}\label{rewriteterm} Let $A$ and $B$ be vector spaces, let $a_1,\dots,a_r$ and $a_1',\dots,a_s'$ be vectors that generate the same subspace of $A$, and let $b_1,\dots,b_r$ be vectors in $B$. Then there exist vectors $b_1',\dots,b_s'$ such that $\sum_ia_i\otimes b_i=\sum_ja_j'\otimes b_j'$.
\end{lemma}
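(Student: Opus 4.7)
The plan is to translate the equality of spans in $A$ into an explicit rewriting inside the tensor product $A\otimes B$. First, observe that the element $\sum_ia_i\otimes b_i$ already lies in the subspace $W\otimes B$ of $A\otimes B$, where $W$ is the common span of $(a_i)_{i=1}^r$ and $(a_j')_{j=1}^s$ in $A$. So the task is essentially to re-express an element of $W\otimes B$ using the generating set $(a_j')_{j=1}^s$ of $W$ rather than $(a_i)_{i=1}^r$.

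Concretely, since $(a_j')_{j=1}^s$ spans $W$ and each $a_i\in W$, for each $i$ I can pick scalars $\lambda_{i1},\dots,\lambda_{is}\in\F$ such that
\[a_i=\sum_{j=1}^s\lambda_{ij}a_j'.\]
Plugging this into $\sum_ia_i\otimes b_i$ and using bilinearity of the tensor product, I get
\[\sum_{i=1}^ra_i\otimes b_i=\sum_{i=1}^r\Big(\sum_{j=1}^s\lambda_{ij}a_j'\Big)\otimes b_i=\sum_{j=1}^sa_j'\otimes\Big(\sum_{i=1}^r\lambda_{ij}b_i\Big),\]
so the lemma holds with $b_j':=\sum_{i=1}^r\lambda_{ij}b_i$.

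There is no real obstacle here. The only subtlety worth flagging is that when $(a_j')_{j=1}^s$ is linearly dependent the scalars $\lambda_{ij}$ are not uniquely determined, and hence neither are the $b_j'$ produced by this construction. Since the lemma only asserts existence of some $b_j'$, this is harmless: any consistent choice of representation of each $a_i$ in terms of the $a_j'$ works.
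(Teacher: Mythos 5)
Your proof is correct and is essentially identical to the paper's: both expand each $a_i$ in terms of the $a_j'$ and use bilinearity to collect terms, setting $b_j'=\sum_i\lambda_{ij}b_i$. Your remark about the non-uniqueness of the coefficients is accurate but immaterial, as you note.
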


\begin{proof}
Let $a_i=\sum_{j=1}^r\theta_{ij}a_j'$ for each $i$, which we can do because the $a_j'$ contain the $a_i$ in their linear span. Then
\[\sum_ia_i\otimes b_i=\sum_{i,j}\theta_{ij}a_j'\otimes b_i=\sum_j a_j'\otimes(\sum_i\theta_{ij}b_i),\]
so we can take $b_j'=\sum_i\theta_{ij}b_i$ for each $j$. 
\end{proof}

It follows immediately that if $T$ is a $d$-tensor given by a formula of the kind
\[T(x_1,\dots,x_d)=\sum_ia_i(x_1)b_i(x_2,\dots,x_d),\]
then we can replace the expression on the right by one in which the $a_i$ are linearly independent. 

Suppose now that we have a $d$ tensor $T$ of slice rank $r$ and that $T$ is given by the expression
\[T(x_1,\dots,x_d)=\sum_{i=1}^d\sum_{j_i=1}^{r_i}a_{ij_i}(x_i)b_{ij_i}(\overline{x_i}),\]
where $r_1+\dots+r_d=r$ and we write $\overline{x_i}$ as shorthand for $(x_1,\dots,x_{i-1},x_{i+1},\dots,x_d)$. By the remark just made, for each $i$ the functions $a_{i1},\dots,a_{ir_i}$ must be linearly independent. 



If $R$ and $S$ are two tensors, we shall use the notation $R.S$ for the tensor multiplication where we sum over all variables that $R$ and $S$ have in common and not over the others. For example, if $R$ is a function of $x_1$ and $x_2$ and $S$ is a function of $x_2$ and $x_3$, then
\[R.S(x_1,x_3)=\sum_{x_2}R(x_1,x_2)S(x_2,x_3),\]
so in this case we have matrix multiplication. We shall make particular use of the case where $R$ is a one-variable function: if $R$ is a function of $x_i$ and $S$ is a function of $x_1,\dots,x_d$, then 
\[R.S(\overline{x_i})=\sum_{x_i}R(x_i)S(x_1,\dots,x_d).\]

\begin{lemma}\label{triangulardecomposition}
Let $T$ be a $d$-tensor given by the formula
\[T(x_1,\dots,x_d)=\sum_{i=1}^d\sum_{j_i=1}^{r_i}a_{ij_i}(x_i)b_{ij_i}(\overline{x_i}),\]
and suppose that for each $i$ the functions $a_{ij_i}$ are linearly independent. For each $i$ choose dual functions $a_{ij_i'}^*$ such that $a_{ij_i'}^*.a_{ij_i}=\d_{j_ij_i'}$ for every $j_i,j_i'\in[r_i]$. Then there are functions $b_{ij_i'}$ such that
\[T(x_1,\dots,x_d)=\sum_{i=1}^d\sum_{j_i=1}^{r_i}a_{ij_i}(x_i)b_{ij_i}'(\overline{x_i})\]
and such that $a_{sj_s}^*.b_{tj_t}'=0$ whenever $1\leq s<t\leq d$. 
\end{lemma}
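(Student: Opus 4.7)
My plan is to clear the unwanted pairings one pair at a time, by a Gaussian-elimination-style sweep that processes pairs $(s,t)$ with $1\leq s<t\leq d$ in lexicographic order. At step $(s,t)$ I set $c_{sj_s,tj_t}=a_{sj_s}^*.b_{tj_t}$, a function of the variables other than $x_s$ and $x_t$, using the \emph{current} values of the $b$'s, and then make the replacements
\[b_{tj_t}\longmapsto b_{tj_t}-\sum_{j_s}a_{sj_s}(x_s)\,c_{sj_s,tj_t},\qquad b_{sj_s}\longmapsto b_{sj_s}+\sum_{j_t}a_{tj_t}(x_t)\,c_{sj_s,tj_t}.\]
A direct check shows that the terms subtracted from the $t$-slice contribution and the terms added across the $s$-slice contributions cancel, so $T$ is preserved throughout the sweep. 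Immediately after step $(s,t)$ we have $a_{sj_s'}^*.b_{tj_t}=0$ for every $j_s'$, since $a_{sj_s'}^*.a_{sj_s}=\d_{j_s'j_s}$.

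The main point to verify is that each orthogonality relation $a_{sj_s'}^*.b_{tj_t}=0$, once established, survives every subsequent step. The algebraic backbone is that contractions against $a_{\ell j}^*$ and $a_{\ell' j'}^*$ with $\ell\neq\ell'$ act on different variables and therefore commute. From this the two things I have to check are: first, any later step $(s'',t)$ with $s<s''<t$ subtracts from $b_{tj_t}$ terms of the form $a_{s''j_{s''}}(x_{s''})(a_{s''j_{s''}}^*.b_{tj_t})$, and applying $a_{sj_s'}^*$ to such a term gives $a_{s''j_{s''}}(x_{s''})\,a_{s''j_{s''}}^*.(a_{sj_s'}^*.b_{tj_t})=0$; second, any later step $(s,t')$ with $t'>t$ adds to $b_{sj_s}$ terms of the form $a_{t'j_{t'}}(x_{t'})c_{sj_s,t'j_{t'}}$, and by the time of that step every pair $(s'',t')$ with $s''<s$ has already been processed, so $a_{s''j_{s''}}^*.c_{sj_s,t'j_{t'}}=a_{sj_s}^*.(a_{s''j_{s''}}^*.b_{t'j_{t'}})=0$, which preserves any previously established relation $a_{s''j_{s''}}^*.b_{sj_s}=0$.

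When the sweep is finished, the final $b_{ij_i}$ furnish the $b'_{ij_i}$ required by the lemma. I expect the main obstacle to be bookkeeping: one must carefully distinguish the ``original'' from the ``current'' $b$-functions, and keep track of which pairs $(s,t)$ have already been processed at each intermediate stage. There is no deep conceptual content, just the commutativity of single-variable contractions applied repeatedly, together with the linear algebra underlying Gaussian elimination.
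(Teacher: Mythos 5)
Your sweep is correct: the two preservation checks you isolate are exactly the ones needed, and in each case the lexicographic ordering guarantees that the pair whose vanishing you invoke has already been processed, so the induction over steps closes. The paper organises the same underlying fact --- that contractions against $a_{ij_i}^*$ in distinct variables commute --- quite differently. It defines the commuting projections $P_iS=\sum_{j_i}a_{ij_i}\otimes(a_{ij_i}^*.S)$ and their complements $Q_i=I-P_i$, expands $T$ via the telescoping identity
\[T=P_1T+P_2Q_1T+\dots+P_dQ_{d-1}\cdots Q_1T+Q_dQ_{d-1}\cdots Q_1T,\]
observes that the last term vanishes because each summand $\sum_{j_i}a_{ij_i}(x_i)b_{ij_i}(\overline{x_i})$ lies in the kernel of $Q_i$, and reads off the orthogonality for $s<t$ from $P_sQ_s=0$ together with commutativity. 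This gives the final functions in closed form, $b_{tj_t}'=a_{tj_t}^*.(Q_{t-1}\cdots Q_1T)$, and thereby avoids any invariant-maintenance argument; your iterative Gaussian elimination costs more bookkeeping (as you anticipate) but requires no operator identity and makes the elementary-operation content explicit. Both proofs are valid.
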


\begin{proof}
For each $i$ let $P_i$ be the projection $T\mapsto\sum_{j_i}a_{ij_i}\otimes(a_{ij_i}^*.T)$. That is, 
\[P_iT(x_1,\dots,x_d)=\sum_{j_i=1}^{r_i}a_{ij_i}(x_i)\sum_{x_i'}a_{ij_i}(x_i')T(x_1,\dots,x_{i-1},x_i',x_{i+1},\dots,x_d).\]
It is easy to check that this is indeed a projection -- that is, that $P_i^2=P_i$. Write $Q_i$ for $I-P_i$. It is also straightforward to see that the $P_i$ commute. We also have that for any tensor $S$, $P_iS$ is a tensor given by a formula of the form $\sum_{j_i}a_{ij_i}(x_i)b_i'(\overline{x_i})$. 

By a simple induction.
\[T=P_1T+P_2Q_1T+\dots+P_dQ_{d-1}\dots Q_1T+Q_dQ_{d-1}\dots Q_1T\]
for any tensor $T$, so by the above remark we will be done provided we can prove that $Q_dQ_{d-1}\dots Q_1T=0$ and that the last condition of the statement of the theorem is satisfied.

Since the $P_i$ commute, so do the $Q_i$, and any function given by a formula of the form $\sum_{j_i=1}^{r_i}a_{ij_i}(x_i)b_{ij_i}(\overline{x_i})$ belongs to the kernel of $Q_i$. It follows that $Q_dQ_{d-1}\dots Q_1T=0$ as required.

Finally, if $s<t$ we have that $P_s(P_tQ_{t-1}\dots Q_1T)=0$, since the projections commute and $P_sQ_s=0$. From this it follows that $a_{sj_s}.b_{tj_t}'=0$ for each $j_s$ and $j_t$.
\end{proof}

It is not hard to prove that the decomposition above is unique, but we shall not need this. 

\begin{proof}[Proof of Theorem \ref{main}]
For $i=1,2$ let $T^i:X_1^i\times\dots\times X_d^i\to\F$ be a $d$-tensor and let $T=T_1\oplus T_2$. Suppose that $T$ has slice rank $r=r_1+\dots+r_d$ and that $T$ is given by the formula
\[T(x_1,\dots,x_d)=\sum_{i=1}^d\sum_{j_i=1}^{r_i}a_{ij_i}(x_i)b_{ij_i}(\overline{x_i}),\]
By Lemma \ref{triangulardecomposition} we may assume that $a_{rj_r}^*.b_{sj_s}=0$ whenever $r<s$.

From this it follows that for every $j_1\in[r_1]$ we have
\[a_{1j_1}^*.T(\overline{x_1})=b_{1j_1}(\overline{x_1}).\]
But for any function $a^*:X_1\to\F$, we know that 
\[a^*.T=a^*.(T^1\oplus T^2)=a^*.T^1\oplus a^*.T^2,\]
where the last equality holds because if $(x_2,\dots,x_d)\in X_2^{\a_2}\times\dots\times X_d^{\a_d}$ and the $\a_i$ are not all equal, then $T(x_1,x_2,\dots,x_d)=0$ for every $x_1$. 

This tells us that $b_{1j_1}$ can be written as a sum $b_{1j_1}^1+b_{1j_1}^2$, where $b_{1j_1}^1$ is supported in $X_2^1\times\dots\times X_d^1$ and $b_{2j_2}$ is supported in $X_2^2\times\dots\times X_d^2$. Writing $X^\a$ for the set $X_1^{\a_1}\times\dots\times X_d^{\a_d}$, it follows that $P_1T$ is supported in the set
\[X^{111\dots 1}\cup X^{211\dots 1}\cup X^{122\dots 2}\cup X^{222\dots 2}.\]
Essentially the same argument for each $s$, $P_sQ_{s-1}\dots Q_1T$ is supported in the union of the four sets $X^{\a}$ for which $\a_1=\dots=\a_{s-1}=\a_{s+1}=\dots=\a_d$, since 
\[Q_{s-1}\dots Q_1T(x_1,\dots,x_d)=\sum_{i=s}^d\sum_{j_i=1}^{r_i}a_{ij_i}(x_i)b_{ij_i}(\overline{x_i}).\]
But if $s\ne t$ and $\a$ and $\be$ are two non-constant sequences in $\{1,2\}^d$ such that $\a_1=\dots=\a_{s-1}=\a_{s+1}=\dots=\a_d$ and $\be_1=\dots=\be_{t-1}=\be_{t+1}=\dots=\be_d$, then $\a\ne\be$, so $X^\a$ and $X^\be$ are disjoint. Since $T$ is supported in $X^{11\dots 1}\cup X^{22\dots 2}$, it follows that all the tensors $P_sQ_{s-1}\dots Q_1T$ are as well.

We may therefore regard $P_iT$ as a matrix of rank $r_i$ indexed by $X_i^1\cup X_i^2$ and 
\[X_1^1\times\dots\times X_{i-1}^1\times X_{i+1}^1\times\dots\times X_d^1\cup X_1^2\times\dots\times X_{i-1}^2\times X_{i+1}^2\times\dots\times X_d^2,\]
which splits up into four blocks and is supported in the two diagonal blocks. Since matrix rank is additive, it follows that we can find functions $c_{ij_i}$ and $d_{ij_i}$ such that
\[\sum_{j_i}a_{ij_i}(x_i)b_{ij_i}(\overline{x_i})=\sum_{j_i}c_{ij_i}(x_i)d_{ij_i}(\overline{x_i}),\]
where for each $j_i$ there exists $\a\in\{1,2\}$ such that $c_{ij_i}$ is supported in $X_i^\a$ and $d_{ij_i}$ is supported in $X_1^\a\times\dots\times X_{i-1}^\a\times X_{i+1}^\a\times\dots\times X_d^\a$.

This gives us that
\[T(x_1,\dots,x_d)=\sum_i\sum_{j_i=1}^{r_i}c_{ij_i}(x_i)d_{ij_i}(\overline{x_i}),\]
and that each term in the decomposition is supported either in $X^{11\dots 1}$ or in $X^{22\dots 2}$. It follows that $\sigma(T_1)+\sigma(T_2)\leq\sigma(T)$, as required.
\end{proof}

\end{document}
\fi

\section{An alternative proof of Theorem \ref{main} for 3-tensors}

There seems no harm in including the argument mentioned earlier that works when $d=3$, even though it is a little more complicated, as the lemmas along the way may be of some interest. However, the reader just interested in obtaining some proof of Theorem \ref{main} can safely skip this section.

For this proof we shall use the more ``matrix-like" conception of tensors.

\begin{lemma}\label{independence} Let $V$ and $W$ be two vector spaces with $V\cap W=\{0\}$, let $v_1,\dots,v_n\in V$ and $w_1,\dots,w_n\in W$ be two sequences of vectors, and let $U\subset V+W$ be the subspace generated by the vectors $v_i+w_i$. Then there exists a sequence $v_1''+w_1'',\dots,v_n''+w_n''$ that generates $U$ with each $v_i''$ in $V$ and each $w_i''$ in $W$, such that the non-zero $v_i''$ are linearly independent and the non-zero $w_i''$ are linearly independent.  
\end{lemma}

\begin{proof}
Without loss of generality $v_1,\dots,v_m$ is a maximal linearly independent subset of $v_1,\dots,v_n$. Then for each $j>m$ we can write
\[v_j=\sum_{i=1}^m\lambda_{ji}v_i\]
For $j>m$ let $w_j'=w_j-\sum_{i=1}^r\lambda_{ji}w_i$ and let $v_j'=0$, and observe that the $v_i'+w_i'$ generate the same subspace as the $v_i+w_i$. (We let $v_i'=v_i$ and $w_i'=w_i$ when $i\leq m$.) We also have that the non-zero $v_i$ are linearly independent. 

Now let us choose $v_1'',\dots,v_n''$ and $w_1'',w_2'',\dots,w_n''$ as follows, with the aim of ensuring that for every $s$ we have that 
\[\langle w_s'',w_{s+1}'',\dots,w_n''\rangle=\langle w_s',w_{s+1}',\dots,w_n'\rangle.\]

We start by setting $w_n''=w_n'$. Once we have chosen $w_{s+1}'',\dots,w_n''$ with the desired property, if
\[w_s'=\sum_{s+1}^n\mu_{si}w_i''\]
then we set $w_s''=0$ and $v_s''=v_s'-\sum_{s+1}^n\mu_{si}v_i''$. Otherwise -- that is, if $w_s'$ is not a linear combination of $w_{s+1}'',\dots,w_n''$ -- we set $w_s''=w_s'$ and $v_s''=v_s'$.

Since $v_{m+1}'=\dots=v_n'=0$, we find that $v_{m+1}''=\dots=v_n''=0$ as well. Also, the non-zero $w_i''$ are linearly independent, as are the vectors $v_1'',\dots,v_m''$, and the vectors $v_i''+w_i''$ generate the same subspace as the vectors $v_i+w_i$. 
\end{proof}

In the next lemma, we write $a\otimes b$ for the function that takes the value $a(x)b(y,z)$ at $(x,y,z)$. Note that the lemma is really about matrices -- the fact that the $b_i$ are functions of two variables is irrelevant, but it is the case we shall use when we apply the lemma.

\begin{lemma}\label{rewriteterm} If $a_1,\dots,a_r$ and $a_1',\dots,a_r'$ generate the same subspace, then any tensor $\sum_ia_i(x)b_i(y,z)$ is equal to some tensor $\sum_ja_j'(x)b_j'(y,z)$.
\end{lemma}

\begin{proof}
Let $a_i=\sum_{j=1}^r\theta_{ij}a_j'$ for each $i$, which we can do because the $a_j'$ contain the $a_i$ in their linear span. Then
\[\sum_ia_i\otimes b_i=\sum_{i,j}\theta_{ij}a_j'\otimes b_i=\sum_j a_j'\otimes(\sum_i\theta_{ij}b_i),\]
so we can take $b_j'=\sum_i\theta_{ij}b_i$ for each $j$. 
\end{proof}

\begin{remark}
The lemma just proved highlights the main difference, for this question, between slice rank and tensor rank, and indeed various other kinds of rank. Each $b_j'$ is a linear combination of the $b_i$, and is therefore a function of the same type. But if we were considering tensor rank, then each $b_i$ would be a rank-1 matrix, and we would not be able to conclude that each $b_j'$ was a rank-1 matrix. Thus, there is a flexibility associated with slice-rank decompositions that we do not have with tensor-rank decompositions.
\end{remark}
\smallskip

We now take three finite sets $X$, $Y$, and $Z$, each partitioned into two subsets, so $X=X^1\cup X^2$, $Y=Y^1\cup Y^2$ and $Z=Z^1\cup Z^2$. (We shall use superscripts to denote elements of the set $\{1,2\}$ and subscripts to index the functions we use in decompositions.) Given a function $a:X\to\F$, we define $a^\alpha$ to be the projection of $a$ to $X^\alpha$: that is, $a^\alpha(x)=a(x)$ if $x\in X^\alpha$ and $a^\alpha(x)=0$ otherwise. We do the same for functions defined on $Y$ and $Z$. Similarly, if $b:Y\times Z\to\F$, then $b^{\beta\gamma}$ is the projection of $b$ to $Y^\beta\times Z^\gamma$, and so on. In particular, if $T:X\times Y\times Z\to\F$ is a tensor, then $T^{\alpha\beta\gamma}$ is the projection of $T$ to $X^\alpha\times Y^\beta\times Z^\gamma$. 

We shall also sometimes use this notation to refer to restrictions rather than projections. For example, if we say that $T=T^{111}\oplus T^{222}$, we mean that $T^{\alpha\be\g}=0$ except if $\a=\be=\g$. In other words, it is sometimes convenient to regard $T^{\a\be\g}$ as defined on $X^\a\times Y^\be\times Z^\g$, and it is sometimes convenient to regard it as defined on all of $X\times Y\times Z$ but supported on $X^\a\times Y^\be\times Z^\g$, and similarly for functions of fewer variables. We hope that no confusion will arise.

\begin{corollary}\label{normalform}
Let $X=X^1\cup X^2$, $Y=Y^1\cup Y^2$ and $Z=Z^1\cup Z^2$ be three finite sets each partitioned into two subsets, and let $T:X\times Y\times Z\to\F$ be a tensor. Suppose that $T$ has a decomposition
\begin{align}\label{rstdecomp}T(x,y,z)=\sum_{i=1}^ra_i(x)b_i(y,z)+\sum_{j=1}^sc_j(y)d_j(x,z)+\sum_{k=1}^te_k(z)f_k(x,y).\end{align}
Then $T$ has such a decomposition with the additional property that for all $\alpha,\beta,\gamma\in\{1,2\}$ the non-zero $a_i^\alpha$ are linearly independent, the non-zero $c_j^\beta$ are linearly independent, and the non-zero $e_k^\gamma$ are linearly independent.
\end{corollary}

\begin{proof}
Applying Lemma \ref{independence} with $V=\F^{X^1}$, $W=\F^{X^2}$, $v_i=a_i^1$, and $w_i=a_i^2$ for each $i$, we obtain a sequence $a_1',\dots,a_r'$ with the same linear span as $a_1,\dots,a_r$ such that the non-zero vectors $(a_i')^1$ are linearly independent and the non-zero vectors $(a_i')^2$ are linearly independent. By Lemma \ref{rewriteterm} we can find functions $b_1',\dots,b_r':Y\times Z\to\F$ such that $\sum_ia_i(x)b_i(y,z)=\sum_ia_i'(x)b_i'(y,z)$ for every $x,y,z$. By symmetry we can rewrite the other two terms in a similar way, and the result is proved.
\end{proof}

We need one further linear algebra lemma.

\begin{lemma}\label{insubspace} Let $U,V,W$ be vector spaces and let $W'$ be a subspace of $W$. Let $u_1,\dots,u_r\in U$ be linearly independent and let $v_1,\dots,v_s\in V$ be linearly independent. Suppose that we have a linear combination $\sum_{i=1}^r\sum_{j=1}^su_i\otimes v_j\otimes w_{ij}$ that belongs to the subspace $U\otimes V\otimes W'$. Then all the vectors $w_{ij}$ belong to the subspace $W'$.
\end{lemma}

\begin{proof}
Suppose not, and let $\phi:W\to\mathbb F$ be a linear functional that vanishes on $W'$ but not on every vector $w_{ij}$. Define $\psi(u\otimes v\otimes w)$ to be $\phi(w)u\otimes v$ and extend this to a linear map $\psi:U\otimes V\otimes W\to U\otimes V$. Then $\psi$ vanishes on $U\otimes V\otimes W'$. However, the image of $\sum_{i=1}^r\sum_{j=1}^su_i\otimes v_j\otimes w_{ij}$ is a non-zero linear combination of the $u_i\otimes v_j$, which are linearly independent, so it is non-zero. This is a contradiction. 
\end{proof}

Now let us adopt our main hypothesis, namely that we have a tensor $T$ as in Corollary \ref{normalform} and that $T=T^{111}\oplus T^{222}$. Suppose also that $T$ has a decomposition as in (\ref{rstdecomp}) above, and that the conclusion of Corollary \ref{normalform} holds for this decomposition. Our hypothesis is equivalent to the statement that $T^{\alpha\beta\gamma}=0$ except if $\alpha=\beta=\gamma$. 

For $\alpha,\beta,\gamma\in\{1,2\}$ let $A^\alpha=\{i:a_i^\alpha\ne 0\}$, let $C^\beta=\{j:c_j^\beta\ne 0\}$, and let $E^\gamma=\{k:e_k^\gamma\ne 0\}$. Then for each $\alpha,\beta,\gamma,x,y,z$, we have that
\[T^{\alpha\beta\gamma}(x,y,z)=\sum_{i\in A^\alpha}a_i^\alpha(x)b_i^{\beta\gamma}(y,z)+\sum_{j\in C^\beta}c_j^\beta(y)d_j^{\alpha\gamma}(x,z)+\sum_{k\in E^\gamma}e_k^\gamma(z)f_k^{\alpha\beta}(x,y).\]

In the next lemma, we shall use bracketed superscripts to denote dependencies and non-bracketed superscripts to denote the parts that a function applies to. So for example, in the statement, the function $p_{ij}^{(\a)\g}$ is defined on $Z^\g$ and depends on $\a$ (because it will be made out of the functions $d_j^{\a\g}$, which are defined on $X^\a\times Z^\g$). 

\begin{lemma}\label{lowrank}
Let $\a,\be,\g$ be not all equal and let $i\in A^\alpha$. Then there exist functions 
$p_{ij}^{(\a)\g}:Z^\gamma\to\F$ and $q_{ik}^{(\a)\be}:Y^\beta\to\F$ such that 
\[b_i^{\beta\gamma}=\sum_{j\in C^\beta}c_j^\beta\otimes p_{ij}^{(\a)\g}+\sum_{k\in E^\g}q_{ik}^{(\a)\be}\otimes e_k^\g,\]
with similar decompositions for $d_j^{\alpha\g}$ and $f_k^{\a\be}$.
\end{lemma}

\begin{proof}
Since the $a_i^\a$ with $i\in A^\a$ are linearly independent, the matrix $(a_i^\alpha(x))$, where $i$ ranges over $A^\alpha$ and $x$ over $X$, has rank $|A^\alpha|$. It follows that we can find for each $i$ a function $h_i^{(\a)}:X\to\F$ such that $\sum_xh_i^{(\a)}(x)a_l^\a(x)=\d_{il}$ for every $l\in A^\a$. Then since $T^{\a\be\g}=0$, we have that
\[0=\sum_xh_i^{(\a)}(x)T^{\a\be\g}(x,y,z)=b_i^{\be\g}(y,z)-\sum_{j\in C^\be}c_j^\be(y)p_{ij}^{(\a)\g}(z)-\sum_{k\in E^\g}q_{ik}^{(\a)\be}(y)e_k^\g(z),\]
where 
\[p_{ij}^{(\a)\g}(z)=-\sum_xh_i^{(\a)}(x)d_j^{\a\g}(x,z)\]
and
\[q_{ik}^{(\a)\be}(y)=-\sum_xh_i^{(\a)}(x)f_k^{\a\be}(x,y).\]
The corresponding results for the functions $d_j^{\a\g}$ and $f_k^{\a\be}$ are proved in the same way.
\end{proof}

Using Lemma \ref{lowrank} we can rewrite the decomposition of $T^{\a\be\g}$ above in the form
\begin{align*}\sum_{i\in A^\a}&\sum_{j\in C^\be}a_i^\a\otimes c_j^\be\otimes p_{ij}^{(\a)\g}+\sum_{i\in A^\a}\sum_{k\in E^\g}a_i^\a\otimes q_{ik}^{(\a)\be}\otimes e_k^{\g}\\
&+\sum_{i\in A^\a}\sum_{j\in C^\be}a_i^\a\otimes c_j^\be\otimes g_{ij}^{(\be)\g}+\sum_{j\in C^\be}\sum_{k\in E^\g}h_{jk}^{\a(\be)}\otimes c_j^\be\otimes e_k^\g\\
&+\sum_{i\in A^\a}\sum_{k\in E^\g}a_i^\a\otimes u_{ik}^{\be(\g)}\otimes e_k^\g+\sum_{j\in C^\be}\sum_{k\in E^\g}v_{jk}^{\a(\g)}\otimes c_j^\be\otimes e_k^\g.\\
\end{align*}
We are still assuming here that $\a,\be$ and $\g$ are not all equal.

Since $T^{\a\be\g}$ is also equal to 0 under this assumption, it follows from Lemma \ref{insubspace} that $p_{ij}^{(\a)\g}+g_{ij}^{(\be)\g}$ is a linear combination of the $e_k^\g$ with $k\in E^\g$, with similar statements for $q_{ik}^{(\a)\be}+u_{ik}^{\be(\g)}$ and for $h_{jk}^{\a(\be)}+v_{jk}^{\a(\g)}$.

We now show that the result is true in the extreme case that $A^1=A^2$, $B^1=B^2$ and $C^1=C^2$. 

\begin{corollary}\label{extremecase}
Suppose that $A^1=A^2=[r]$, $B^1=B^2=[s]$ and $C^1=C^2=[t]$. Then the slice ranks of $T^{111}$ and $T^{222}$ are both at most $\min\{r,s,t\}$.
\end{corollary}

\begin{proof} For this proof, let us adopt the convention that summing over $i$ means summing over $i\in A^\a=A^\be$, and similarly for $j$ and $k$. 

From what we have just proved, with $(\a,\be,\g)=(2,2,1)$, we have for all $i,j$ that $p_{ij}^{(\a=2)1}+g_{ij}^{(\be=2)1}$ is a linear combination of the $e_k^1$, and we have similar conclusions for $q_{ik}^{(\a=2)1}+u_{ik}^{1(\g=2)}$ and $h_{jk}^{1(\be=2)}+v_{jk}^{1(\g=2)}$. Here we are writing $p_{ij}^{(\a=2)1}$ to denote the function $p_{ij}^{(\a)1}$ in the case $\a=1$, and so on. (It would be nice to be able to write the simpler $p_{ij}^{(2)1}$, but then it would not be clear that 2 was the value taken by $\a$.) 

Now recall that for all $\a,\be,\g$, we have that
\[T^{\alpha\beta\gamma}(x,y,z)=\sum_{i}a_i^\alpha(x)b_i^{\beta\gamma}(y,z)+\sum_{j}c_j^\beta(y)d_j^{\alpha\gamma}(x,z)+\sum_{k}e_k^\gamma(z)f_k^{\alpha\beta}(x,y).\]
Substituting the formulae obtained in Lemma \ref{lowrank} for $b_i^{11}$, $d_j^{11}$ and $f_k^{11}$ by taking $(\a,\be,\g)=(2,1,1), (1,2,1)$ and $(1,1,2)$, respectively, we obtain the formula
\begin{align*}T^{111}&=\sum_{i,j}a_i^1\otimes c_j^1\otimes p_{ij}^{(\a=2)1}+\sum_{i,k}a_i^1\otimes q_{ik}^{(\a=2)1}\otimes e_k^1\\ 
&+\sum_{i,j}a_i^1\otimes c_j^1\otimes g_{ij}^{(\be=2)1}+\sum_{j,k}h_{jk}^{1(\be=2)}\otimes c_j^1\otimes e_k^1\\
&+\sum_{i,k}a_i^1\otimes u_{ik}^{1(\g=2)}\otimes e_k^1+\sum_{j,k}v_{jk}^{1(\g=2)}\otimes c_j^1\otimes e_k^1.\\
\end{align*}
The observations in the second paragraph of this proof imply that the right hand side belongs to the linear span of the functions $a_i^1\otimes c_j^1\otimes e_k^1$. From this the result for $T^{111}$ follows. The proof for $T^{222}$ is similar. 
\end{proof}




Since $2\min\{r,s,t\}\leq r+s+t$, we are done in this case.



To do the general case, we reduce to the case covered by Corollary \ref{extremecase} using an inductive argument.

\begin{proof}[Proof of Theorem \ref{main} for 3-tensors]
Suppose now that the hypothesis of Corollary \ref{extremecase} does not hold. Then without loss of generality $a_1^2=0$. Let $P$ be the matrix of a projection to the one-dimensional subspace of $\F^{X}$ generated by $a_1$ such that $P$ vanishes on all functions supported in $X^2$, and let $Q=I-P$. Then 
\[T(x,y,z)=\sum_{x'}P(x,x')T(x',y,z)+\sum_{x'}Q(x,x')T(x',y,z).\]
For every $y,z$, the sum in the first term is a function of $x$, and that function is a multiple of $a_1^1$. Therefore, it can be written in the form $a_1^1(x)b(y,z)$. Also, if $(y,z)\notin Y^1\times Z^1$, then $T(x',y,z)=0$ for every $x'\in X^1$, and therefore the first term vanishes, by the condition that $P$ vanishes on functions supported in $X^2$. It follows that $b$ is supported on $Y^1\times Z^1$.

As for the second term, writing $Qg(x,u_1,\dots,u_m)$ as shorthand for the sum $\sum_{x'}Q(x,x')g(x',u_1,\dots,u_m)$, it is equal to
\[\sum_{i=1}^rQa_i(x)b_i(y,z)+\sum_{j=1}^sc_j(y)Qd_j(x,z)+\sum_{k=1}^te_k(z)Qf_k(x,y).\]
But $Qa_1=0$, so this is a decomposition of $QT$ into $(r-1)+s+t$ pieces. Furthermore, since $PT$ is supported in $X^1\times Y^1\times Z^1$, it follows that $QT$ is also a direct sum. Therefore, by induction on $r+s+t$, $\sigma((QT)^{111})+\sigma((QT)^{222})\leq r-1+s+t$. Since $(PT)^{111}(x,y,z)=a_1^1(x)b(y,z)$ and $(PT)^{222}=0$, it follows that $\sigma(T^{111})+\sigma(T^{222})\leq r+s+t$. 
\end{proof}

\iftrue
\else
Since $2\min\{r,s,t\}\leq r+s+t$, we are done in this case. It is also easy to see that we are done in the other extreme case, where $A^1\cap A^2, B^1\cap B^2$ and $C^1\cap C^2$ are all empty. So now we would like to deal with the intermediate cases. For this we need a further idea and a couple more linear-algebraic lemmas.

The idea is that if we want to prove that $\sigma(T)\geq\sigma(T^{111})+\sigma(T^{222})$, it is sufficient to prove the same statement for a different tensor $U$ provided that $\sigma(U)=\sigma(T)$, $\sigma(U^{111})=\sigma(T^{111})$, and $\sigma(U^{222})=\sigma(T^{222})$. So we are free to apply to $T$ any transformation that preserves these slice ranks. We shall also want to ensure that $U=U^{111}\oplus U^{222}$ in order to apply our results above. This idea is not mathematically essential, but it is very convenient: it allows us to change basis in a certain way that makes the problem easier to think about.

The following lemma resembles Lemma \ref{independence}, both in its statement and its proof.

\begin{lemma}\label{twopartmatrix}
Let $M=\begin{pmatrix}M^1\\ M^2\end{pmatrix}$ be a matrix, thought of as a function of two variables $x\in X$ and $y\in Y$, and let $X^1$ and $X^2$ be the partition of $X$ that corresponds to the partition of the rows of $M$ into the rows of $M^1$ and the rows of $M^2$. Then there are invertible matrices $P,Q$ such that $P$ is of the form $\begin{pmatrix}P^1&0\\ 0&P^2\\ \end{pmatrix}$ with $P^1$ supported in $X^1\times X^1$ and $P^2$ supported in $X^2\times X^2$, and such that $PMQ$ is of the form $\begin{pmatrix}I_{r_1}&0&0\\ 0&0&A\\ 0&I_{r_2}&0\\ 0&0&B\\ \end{pmatrix}$, where the top half of the matrix is supported in $X^1\times Y$, the bottom half is supported in $X^2\times Y$, the non-zero columns of $A$ and the non-zero columns of $B$ are linearly independent, and the set of $y$ for which the $y$th column of $A$ is non-zero is the same as it is for $B$.
\end{lemma}

\begin{proof}
If the columns of $M^1$ and the columns of $M^2$ are linearly independent, then we are done. Otherwise, we proceed by induction on the number of columns. Suppose without loss of generality that the columns of $M^2$ are not linearly independent. Then we can right-multiply by an invertible matrix and obtain a matrix for which $M^2$ has a column of zeros. If the corresponding column of $M^1$ also has zeros (after the transformation), then we can remove that column from the whole matrix and apply induction. 

If the corresponding column of $M^1$ has a non-zero entry, then we can left-multiply by a matrix $\begin{pmatrix}P^1&0\\ 0&I^2\\ \end{pmatrix}$, where $P^1$ is an invertible matrix supported on $X^1\times X^1$ and $I^2$ denotes the identity matrix supported on $X^2\times X^2$, to obtain a matrix such that the given column contains a single entry of 1 and is otherwise 0. (This is because such a matrix can be obtained from $M$ by applying elementary row operations that involve only the rows from $X^1$.) We can then right-multiply by an invertible matrix to ensure that all the other entries in the row that contains the 1 are 0 as well. Applying further row and column operations, we can ensure that the top left-hand entry is 1 and that all other entries in the first row and column of $M$ are 0.  

Now we can remove the first row and column that contain that entry and apply the inductive hypothesis to the matrix that remains, and we obtain the desired conclusion.
\end{proof}

\begin{lemma}\label{invariant}
Let $T:X\times Y\times Z\to\F$ be a 3-tensor as before, with $X$, $Y$ and $Z$ each partitioned into two sets. Let $P:X\times X\to F$ be a matrix of the form $\begin{pmatrix}P^1&0\\ 0&P^2\\ \end{pmatrix}$ with $P^1$ and $P^2$ invertible. Let $U$ be the tensor defined by the formula
\[U(x,y,z)=\sum_{x'}P(x,x')T(x',y,z).\]
Then $U=U^{111}\oplus U^{222}$, $\sigma(U)=\sigma(T)$, $\sigma(U^{111})=\sigma(T^{111})$, and $\sigma(U^{222})=\sigma(T^{222})$.
\end{lemma}

\begin{proof}
Since $P(x,x')$ is zero unless either $x,x'\in X^1$ or $x,x'\in X^2$, it follows from the fact that $T^{\a\be\g}=0$ unless $\a=\be=\g$ that the same is true of $U$. This proves the first assertion.

For the statements about slice rank it suffices to show that $\sigma(U)\leq\sigma(T)$ for any invertible matrix $P$ and any 3-tensor $T$ (that is, not necessarily direct sums), since we can then apply the result to all three tensors, and using the inverse of $P$ we can obtain the reverse inequalities.

But if $T$ has a decomposition
\[T(x,y,z)=\sum_{i=1}^ra_i(x)b_i(y,z)+\sum_{j=1}^sc_j(y)d_j(x,z)+\sum_{k=1}^te_k(z)f_k(x,y),\]
then clearly so does $U$. Indeed, if we set $a_i'(x)=\sum_{x'}P(x,x')a_i(x')$, $d_j'(x,z)=\sum_{x'}P(x,x')d(x',z)$, and $f_k'(x,y)=\sum_{x'}P(x,x')f_k(x',y)$, then
\[U(x,y,z)=\sum_{i=1}^ra_i'(x)b_i(y,z)+\sum_{j=1}^sc_j(y)d_j'(x,z)+\sum_{k=1}^te_k(z)f_k'(x,y),\]
giving us a decomposition of $U$ of the same size. 
\end{proof}

It will also be important to us that the functions $c_j$ and $e_k$ are unchanged when we apply this transformation to $T$.
\fi

\iftrue
\else
The transformations we shall use are natural analogues of elementary row operations applied to matrices. Define a \emph{slice} of a 3-tensor $T$ to be a matrix obtained by holding one of the three variables constant. We shall call it an $x$-slice if $x$ is the variable held constant, and similarly for $y$ and $z$. Define an \emph{elementary $x$-slice operation} to be an operation of one of the following three kinds.
\begin{enumerate}
\item Multiply an $x$-slice by a non-zero scalar.
\item Interchange two $x$-slices.
\item Replace an $x$-slice by the sum of that $x$-slice and a multiple of another $x$-slice.
\end{enumerate}
Of course, we define $y$-slice operations and $z$-slice operations similarly.

\begin{lemma}\label{operations}
Let $T$ be a 3-tensor and let $U$ be obtained from $T$ by means of elementary slice operations. Then $\sigma(U)=\sigma(T)$.
\end{lemma}

\begin{proof}
Since the inverse of a slice operation is a slice operation, it suffices to prove that $\sigma(U)\leq\sigma(T)$. But if $T$ has a decomposition
\[T(x,y,z)=\sum_{i=1}^ra_i(x)b_i(y,z)+\sum_{j=1}^sc_j(y)d_j(x,z)+\sum_{k=1}^te_k(z)f_k(x,y),\]
and $U$ is obtained from $T$ by an elementary slice operation, then we can simply apply that slice operation to each of the rank-1 tensors in the decomposition of $T$ and it is simple to see that we obtain a decomposition of $U$.
\end{proof}

An important remark is that if we apply an $x$-slice operation, this will change the functions $d_j$ and $f_k$ but not the functions $c_j$ and $e_k$.

Now suppose that $T=T^{111}\oplus T^{222}$. Then the diagonal-sum property is preserved by $x$-slice operations provided that if they involve two slices, then those two slices either both come from $X^1$ or both come from $X^2$. 

Suppose once again that $T:X\times Y\times Z\to\F$, that $X, Y$ and $Z$ are partitioned into sets $X^1\cup X^2$, $Y^1\cup Y^2$, and $Z^1\cup Z^2$, respectively, that $T=T^{111}\oplus T^{222}$, and that $T$ has the decomposition
\[T(x,y,z)=\sum_{i=1}^ra_i(x)b_i(y,z)+\sum_{j=1}^sc_j(y)d_j(x,z)+\sum_{k=1}^te_k(z)f_k(x,y).\]
Let $M$ be the $|X|\times r$ matrix whose columns are $a_1,\dots,a_r$: that is, $M_{xi}=(a_i(x))$, for $i=1,2,\dots,r$ and $x\in X$. Let $M^1$ and $M^2$ be the restrictions of $M$ to the sets $x\in X^1$ and $x\in X^2$, respectively, so $M=\begin{pmatrix}M^1\\ M^2\\ \end{pmatrix}$. 
\fi

\iftrue
\else
The effect of an $x$-slice operation on the decomposition of $T$ is to apply the corresponding row-operation to the matrix $A$. For instance, if $x_1\ne x_2$ and we let $U(x_1,y,z)=T(x_1,y,z)+\lambda T(x_2,y,z)$ and $U(x,y,z)=T(x,y,z)$ otherwise, then the effect on each $a_i$ will be to change the value of $a_i(x_1)$ to $a_i(x_1)+\lambda a_i(x_2)$. 

We can write $M=\begin{pmatrix}M^1\\ M^2\\ \end{pmatrix}$ where the rows of $M^1$ correspond to those $x$ that belong to $X^1$ and the rows of $M^2$ correspond to those $x$ that belong to $X^2$. Using elementary row operations that preserve the diagonal-sum property we can put both $M^1$ and $M^2$ into reduced row-echelon form. 

Suppose that we perform the corresponding elementary slice operations on $T$. Then we end up with a tensor $U$ such that $U=U^{111}\oplus U^{222}$, $\sigma(U)=\sigma(T), \sigma(U^{111})=\sigma(T^{111})$, and $\sigma(U^{222})=\sigma(T^{222})$. Moreover, $U$ has a decomposition of the form 
\[\sum_{i=1}^ra_i(x)b_i(y,z)+\sum_{j=1}^sc_j(y)d_j(x,z)+\sum_{k=1}^te_k(z)f_k(x,y)\]
such that the two parts of the matrix $M$ defined above are in reduced row-echelon form.

Each column of $M$ contains zero, one or two leading entries. (This is because $M$ itself is not in reduced row-echelon form, but rather consists of two submatrices each in reduced row-echelon form.) For each column that contains exactly one leading entry, we can perform elementary column operations to ensure that the rest of the row that contains that entry is zero. Since elementary column operations do not affect the linear span of the columns, they do not affect the linear span of the $a_i$, so by Lemma \ref{rewriteterm} we can rewrite the first term of the decomposition so as to ensure that every row that contains a leading entry that is the only leading entry in its column consists only of zeros, apart from that leading entry.

After reordering the rows and columns if necessary, we can say that $M$ has the following structure:
\[\begin{pmatrix}I_{r_1}&0&0&0\\ 0&0&I_{r_3}&A\\ 0&0&0&0\\ 0&I_{r_2}&0&0\\0&0&I_{r_3}&B\\ 0&0&0&0\\ \end{pmatrix}\]
After further column operations, we can reduce $A$ to 0. 

We now continue along the lines of the proof of Lemma \ref{independence}.

Let $P_1\subset X^1$ be the set of $x$ corresponding to the rows of the matrix $I_{r_1}$, and note that for each $x\in P_1$ there is exactly one $i$ such that $a_i$ is 1 at $x$ and 0 everywhere else. Let $H_1$ be the set of these $i$. Then the tensor $\sum_{i\in H_1}a_i(x)b_i(y,z)$ is supported on $P_1\times Y\times Z$. Similarly, let $P_2\subset X^2$ be the set of $x$ corresponding to the rows of the matrix $I_{r_2}$ and define $H_2$ to be the set of corresponding $i$. Then $\sum_{i\in H_2}a_i(x)b_i(y,z)$ is supported on $P_2\times Y\times Z$. Finally, let $P_3=X\setminus(P_1\cup P_2)$, let $H_3=[r]\setminus(H_1\cup H_2)$, and note that for each $i\in H_3$, $a_i$ is supported in $P_3$. Therefore, we can write
\[\sum_{i=1}^ra_i(x)b_i(y,z)=S_1+S_2+S_3\]
with $S_j(x,y,z)=\sum_{i\in H_j}a_i(x)b_i(y,z)$ supported on $P_j\times Y\times Z$ for $j=1,2,3$.
\fi

\iftrue
\else
Let $P$ and $Q$ be invertible matrices given by Lemma \ref{twopartmatrix} and let $U=PT$ (this being shorthand for the equation $U(x,y,z)=\sum_{x'}P(x,x')T(x',y,z)$). Then we have the conclusion of Lemma \ref{invariant}. Let us rename the functions $a_i, d_j$ and $f_j$ so that now they name the functions that are used to decompose $U$, and let us define $M$ as above, but with the new functions $a_i$. Then $MQ$ satisfies the conclusion of Lemma \ref{twopartmatrix}, and since $Q$ is invertible the columns of $MQ$ generate the same subspace as those of $Q$. Therefore, letting $a_1',\dots,a_r'$ be these columns, Lemma \ref{rewriteterm} allows us to rewrite the term $\sum_{i=1}^ra_i(x)b_i(y,z)$ as a term $\sum_{i=1}^ra_i'(x)b_i'(y,z)$ for some functions $b_1',\dots,b_r'$. Again let us rename and drop the dashes, and let us write $M$ instead of $MQ$. 

Lemma \ref{twopartmatrix} tells us that $M$ has the form $\begin{pmatrix}I_{r_1}&0&0\\ 0&0&A\\ 0&I_{r_2}&0\\ 0&0&B\\ \end{pmatrix}$, where the top half and bottom half are $M^1$ and $M^2$. For $i=1,2$ let $P^i$ be the set of $x$ that correspond to the rows of the identity matrix $I_{r_i}$, and let $P^3=X\setminus(P^1\cup P^2)$. Similarly, for $i=1,2$ let $H^i$ be the set of indices of columns of the identity matrix $I_{r_i}$ and let $H^3=[r]\setminus(H^1\cup H^2)$. Then we can write the decomposition of $U$ as
\begin{align*}U(x,y,z)=\sum_{i\in H^1}a_i(x)&b_i(y,z)+\sum_{i\in H^2}a_i(x)b_i(y,z)+\sum_{i\in H^3}a_i(x)b_i(y,z)\\
&+\sum_{j=1}^sc_j(y)d_j(x,z)+\sum_{k=1}^te_k(z)f_k(x,y).\\
\end{align*}
The first two terms are supported in $P^1\times Y\times Z$ and $P^2\times Y\times Z$, respectively. By subtracting off suitable parts from the last two terms and adding them to the first two terms, we can obtain a different decomposition of $U$ such that the first two terms are still supported in $P^1\times Y\times Z$ and $P^2\times Y\times Z$ and the three remaining terms are all supported in $P^3\times Y\times Z$. 

Let us assume that this has been done and that the functions have been renamed. Let $V$ be the sum of the last three terms. Then $V$ is supported in $P^3\times Y\times Z$ and $V=V^{111}\oplus V^{222}$. To see the truth of this last claim, suppose that $(x,y,z)\in (P_3\cap X^\alpha)\times Y^\beta\times Z^\gamma$ and $\alpha,\beta$ and $\gamma$ are not all equal. Then $\sum_{i\in H_\theta}a_i(x)b_i(y,z)=0$ if $\theta=1$ or $\theta=2$, since then $i\in H_1\cup H_2$, which implies that $a_i(x)=0$. It follows that $V(x,y,z)=U(x,y,z)=0$.

By induction on $r+s+t$, as long as $H^1$ and $H^2$ are not both empty,
\[\sigma(V^{111})+\sigma(V^{222})\leq\sigma(V)\leq|H^3|+s+t,\]
which implies that 
\[\sigma(U^{111})+\sigma(U^{222})\leq|H^1|+|H^2|+|H^3|+s+t=r+s+t,\]
and we are done. 

If $H^1$ and $H^2$ are both empty, and the same is true for the matrices that correspond to the other two terms, then the hypotheses of Corollary \ref{extremecase} hold and again we are done. This completes the proof of Theorem \ref{main}.
\fi

\iftrue
\else

Now let us return to the decomposition of $U$, which (after suitable renaming) will be of the form
\[\sum_{i=1}^ra_i(x)b_i(y,z)+\sum_{j=1}^sc_j(y)d_j(x,z)+\sum_{k=1}^te_k(z)f_k(x,y).\]
We can modify this decomposition so as to ensure that the second and third terms are supported on $P_3\times Y\times Z$. Indeed, consider the restriction of the term $\sum_{j=1}^sc_j(y)d_j(x,z)$ to $P_1\times Y\times Z$. If we subtract it from the sum, we obtain a new sum of the same kind, where for each matrix $d_j$ we have set the rows with $x\in P_1$ to zero to obtain a matrix $d_j'$ supported on $(X\setminus P_1)\times Z$. If we now compensate for this by adding the restriction to the tensor $S_1$, we obtain a tensor $S_1'$ that is still supported on $P_1\times Y\times Z$ and therefore for trivial reasons still has a decomposition of the form $S_1(x,y,z)=\sum_{i\in H_1}a_i(x)b_i'(y,z)$. Doing this for both terms and for $P_1$ and $P_2$ we arrive, as claimed at a decomposition with the property claimed.

Having made this adjustment, we claim that the tensor $V$ given by
\[V(x,y,z)=\sum_{i\in H_3}a_i(x)b_i(y,z)+\sum_{j=1}^sc_j(y)d_j(x,z)+\sum_{k=1}^te_k(z)f_k(x,y)\]
is a direct sum supported on $P_3\times Y\times Z$. Indeed, if $(x,y,z)\in (P_3\cap X^\alpha)\times Y^\beta\times Z^\gamma$ and $\alpha,\beta$ and $\gamma$ are not all equal, then $\sum_{i\in H_\theta}a_i(x)b_i(y,z)=0$ if $\theta=1$ or $\theta=2$, since then $i\in H_1\cup H_2$, which implies that $a_i(x)=0$. It follows that $V(x,y,z)=U(x,y,z)=0$.

We now repeat the argument for $V$, this time using $y$-slice operations and aiming to put the second term into a suitable ``standard form". Note that none of these operations will affect the conclusions we have reached for the first term.

\end{document}

If $\a=\be=\g$, then the situation is slightly more complicated. Let us consider the case where they are all equal to 1. Then Lemma \ref{lowrank} still gives us a decomposition if $i\in A^2$, with similar statements for the $d_j^{11}$ and $f_k^{11}$, so we have that
\begin{align*}T^{111}(x,y,z)&=\sum_{i\notin A^2}a_i^1(x)b_i^{11}(y,z)+\sum_{j\notin C^2}c_j^1(y)d_j^{11}(x,z)+\sum_{k\notin E^2}e_k^{11}(z)f_k^1(x,y)\\ 
&+\sum_{i\in A^1\cap A^2}\sum_{j\in C^1}a_i^1(x) c_j^1(y) p_{ij}^{11}(z)+\sum_{i\in A^1\cap A^2}\sum_{k\in E^1}a_i^1(x)q_{ik}^{11}(y)e_k^{1}(z)\\
&+\sum_{i\in A^1}\sum_{j\in C^1\cap C^2}a_i^1(x)c_j^1(y)g_{ij}^{11}(z)+\sum_{j\in C^1\cap C^2}\sum_{k\in E^1}h_{jk}^{11}(x)c_j^1(y)e_k^1(z)\\
&+\sum_{i\in A^1}\sum_{k\in E^1\cap E^2}a_i^\a(x)u_{ik}^{11}(y)e_k^1(z)+\sum_{j\in C^1}\sum_{k\in E^1\cap E^2}v_{jk}^{11}(x)c_j^1(y)e_k^1(z).\\
\end{align*}

So far, this shows that $T^{111}$ has slice rank at most $|A^1|+|C^1|+|E^1|$, which is not strong enough, since $|A^1|+|A^2|$ may be bigger than $r$, and similarly for the $C^\be$ and $E^\g$. However, we can use the results we have proved to rewrite the second half of the right-hand side in a more efficient way.

To see this, note first that if we set $\a=2,\be=\g=1$ in Lemma \ref{lowrank}, then for each $i\in A^1\cap A^2$ we obtain an expression for $b_i^{11}$ of the form
\[b_i^{11}=\sum_{j\in C^1}c_j^1\otimes p_{ij}^{11}+\sum_{k\in E^1}q_{ik}^{11}\otimes e_k^1,\]
with similar decompositions for $d_j^{11}$ and $f_k^{11}$ when $j\in C^1\cap C^2$ and $k\in E^1\cap E^2$.

This implies that the second half of the decomposition of $T^{111}$ above is the value at $(x,y,z)$ of a tensor of the form
\begin{align*}
&\sum_{i\in A^1\cap A^2}\sum_{j\in C^1}a_i^1\otimes c_j^1\otimes p_{ij}^{11}+\sum_{i\in A^1\cap A^2}\sum_{k\in E^1}a_i^1\otimes q_{ik}^{11}\otimes e_k^1\\
&+\sum_{i\in A^1}\sum_{j\in C^1\cap C^2}a_i^1\otimes c_j^1\otimes g_{ij}^{11}+\sum_{j\in C^1\cap C^2}\sum_{k\in E^1}h_{jk}^{11}\otimes c_j^1\otimes e_k^1\\
&+\sum_{i\in A^1}\sum_{k\in C^1\cap C^2}a_i^1\otimes u_{ik}^{11}\otimes e_k^1+\sum_{j\in C^1}\sum_{k\in E^1\cap E^2}v_{jk}^{11}\otimes c_j^1\otimes e_k^1 .\\ 
\end{align*}
\fi

\iftrue
\else
\section{The proof for $d$-tensors}

We begin with the notation. Let $X_1,\dots,X_d$ be finite sets, with each $X_i$ being partitioned into two sets $X_i^1$ and $X_i^2$. Let $T:X_1\times\dots\times X_d\to\F$ be a $d$-tensor, and write $T^{\a_1\dots\a_d}$ either for the projection or for the restriction to $X_1^{\a_1}\times\dots\times X_d^{\a_d}$ -- which it means will be clear from the context. We assume that $T$ is the direct sum $T^{11\dots1}\oplus T^{22\dots2}$ -- that is, that all other $T^{\a_1\dots\a_d}$ are zero. 

Given $(x_1,\dots,x_d)\in X_1\times\dots\times X_d$, let us write $\overline{x_i}$ as shorthand for the sequence $(x_1,\dots,x_{i-1},x_{i+1},\dots,x_d)$. Then our other assumption about $T$ will be that it has a decomposition of the form
\[T(x_1,\dots,x_d)=\sum_{j_1=1}^{r_1}a_{1j_1}(x_1)b_{1j_1}(\overline{x_1})+\dots+\sum_{j_d=1}^{r_d}a_{dj_d}(x_d)b_{dj_d}(\overline{x_d}),\]
and our aim will be to show that $\sigma(T^{11\dots 1})+\sigma(T^{22\dots 2})\leq r_1+\dots+r_d$ by finding suitable decompositions.

A simple application of Lemma \ref{rewriteterm} allows us to assume that for each $i$ the functions $a_{i1},\dots,a_{ir_i}$ are linearly independent. As in the proof for 3-tensors, we shall begin by proving the result under the additional assumption that the functions $a_{i1}^1,\dots,a_{ir_i}^1$ are linearly independent and the functions $a_{i1}^2,\dots,a_{ir_1}^2$ are linearly independent. While we are making this assumption, we shall adopt as a convention that any sum over $j_i$ is from 1 to $r_i$.

We shall also write $\overline{\a_i}$ as shorthand for $\a_1,\dots,\a_{i-1},\a_{i+1},\dots,\a_d$. We then have that
\[T^{\a_1\dots\a_d}(x_1,\dots,x_d)=\sum_{j_1}a_{1j_1}(x_1)^{\a_1}b_{1j_1}^{\overline{\a_1}}(\overline{x_1})+\dots+\sum_{j_d}a_{dj_d}^{\a_d}(x_d)b_{dj_d}^{\overline{\a_d}}(\overline{x_d}),\]
where as in the $d=3$ case the superscripts denote restrictions to suitable products of the $X_i^{\a_i}$ (so for example each function $b_{ij_i}^{\overline{\a_i}}$ is defined on $X_1^{\a_1}\times\dots\times X_{i-1}^{\a_{i-1}}\times X_{i+1}^{\a_{i+1}}\times\dots\times X_d^{\a_d}$). 

For the proof, it will be useful to condense the notation further. If $A=\{i_1,\dots,i_k\}\subset[d]$ with $i_1<\dots<i_k$, we shall write $x_A$ for $(x_{i_1},\dots,x_{i_k})$, $\a_A$ for $\a_{i_1},\dots,\a_{i_k}$, $X_A^{\a_A}$ for $X_{i_1}^{\a_{i_1}}\times\dots\times X_{i_k}^{\a_{i_k}}$, $j_A$ for $j_{i_1},\dots,j_{i_k}$, and $[r_A]$ for $[r_{i_1}]\times\dots\times[r_{i_k}]$.

As in the proof for $d=3$, the functions $b_{Aj_A}^{\a_{A^c}}$ in the next two results will depend on $\a_i$ for which $i\in A$. For the time being we shall not pay attention to these dependencies, but then we shall come back and see what the proofs tell us about them.

\begin{lemma}\label{step1}
Suppose that
\[0=\sum_{|A|=k}\sum_{j_A}a_{Aj_A}^{\a_A}(x_A)b_{Aj_A}^{\a_{A^c}}(x_{A^c}),\]
with $a_{Aj_A}^{\a_A}=\mathop{\bigotimes}_{i\in A}a_{ij_i}^{\a_i}$ for each $A$ (but with no such restriction on the $b_{Aj_A}^{\a_{A^c}}$). Now fix some $A$ with $|A|=k$. Then there exist functions $g_{Cj_C}^{\a_C}:X^C\to\F$ such that
\[b_{Aj_A}^{\a_{A^c}}(x_{A^c})=\sum_{|B|=k,B\ne A}\sum_{j_{B\setminus A}}a_{(B\setminus A)j_{B\setminus A}}^{\a_{B\setminus A}}(x_{B\setminus A})g_{(A\cup B)^cj_{(A\cup B)^c}}^{\a_{(A\cup B)^c}}(x_{(A\cup B)^c}).\]
\end{lemma}

\begin{proof} 
By assumption the functions $a_{ij_i}^{\a_i}$ are linearly independent for each $i$, from which it follows that the functions $a_{Aj_A}^{\a_A}$ are linearly independent. From that it follows that we can choose for each $j_A\in[r_A]$ a function $f_{j_A}^{\a_A}$ such that 
\[\sum_{x_A\in X_A^{\a_A}}f_{j_A}^{\a_A}(x_A)a_{Aj_A'}^{\a_A}(x_A)=\delta_{j_Aj_A'}\]
for every $j_A'\in[r_A]$. From that we obtain the identity
\[0=b_{Aj_A}^{\a_{A^c}}(x_{A^c})+\sum_{|B|=k,B\ne A}\sum_{j_B}\sum_{x_A\in X_A^{\a_A}}f_{j_A}^{\a_A}(x_A)a_{Bj_B}^{\a_B}(x_B)b_{Bj_B}^{\a_{B^c}}(x_{B^c}).\]
That is, if we fix $x_{A^c}$ and sum over all $x_A$, we obtain the equality.

We can rewrite this as
\[0=b_{Aj_A}^{\a_{A^c}}(x_{A^c})-\sum_{|B|=k,B\ne A}\sum_{j_{B\setminus A}}a_{(B\setminus A)j_{B\setminus A}}^{\a_{B\setminus A}}(x_{B\setminus A})g_{(A\cup B)^cj_{(A\cup B)^c}}^{\a_{(A\cup B)^c}}(x_{(A\cup B)^c}),\]
where 
\[g_{(A\cup B)^cj_{(A\cup B)^c}}^{\a_{(A\cup B)^c}}(x_{(A\cup B)^c})=-\sum_{j_{A\cap B}}\sum_{x_A\in X_A^{\a_A}}f_{j_A}^{\a_A}(x_A)a_{(A\cap B)j_{A\cap B}}^{\a_{A\cap B}}(x_{A\cap B})b_{Bj_B}^{\a_{B^c}}(x_{B^c}).\]
\end{proof}

\begin{corollary}\label{step2}
Given the assumptions of Lemma \ref{step1}, we have also that
\[0=\sum_{|A|=|B|=k, A\ne B}\sum_{j_{A\cup B}}a_{(A\cup B)j_{A\cup B}}^{\a_{A\cup B}}(x_{A\cup B})g_{(A\cup B)^cj_{(A\cup B)^c}}^{\a_{(A\cup B)^c}}(x_{(A\cup B)^c}).\]
\end{corollary}

\begin{proof}
This follows from substituting the expression we obtained for each $b_{Aj_A}^{\a_{A^c}}$ back into the original formula and noting that $A\cup B$ is the disjoint union of $A$ and $B\setminus A$.
\end{proof}

The next corollary gives us our main inductive step.

\begin{corollary}\label{inductivestep}
Again given the assumptions of Lemma \ref{step1}, there exist functions $b_{Dj_D}^{\a_{D^c}}$ for each $|D|=k+1$ such that
\[0=\sum_{|D|=k+1}\sum_{j_D}a_{Dj_D}^{\a_D}(x_D)b_{Dj_D}^{\a_{D^c}}(x_{D^c}).\]
\end{corollary}

\begin{proof}
Each pair $(A,B)$ of distinct sets of size $k$ has a union of size at least $k+1$, which therefore contains some set $D$ of size $k+1$. Let us therefore partition these pairs into classes $\mathcal A_D$ in such a way that if $(A,B)\in\mathcal A_D$, then $D\subset A\cup B$. Then Corollary \ref{step2} implies that
\[0=\sum_{|D|=k+1}\sum_{(A,B)\in\mathcal A_D}\sum_{j_{A\cup B}}a_{(A\cup B)j_{A\cup B}}^{\a_{A\cup B}}(x_{A\cup B})g_{(A\cup B)^cj_{(A\cup B)^c}}^{\a_{(A\cup B)^c}}(x_{(A\cup B)^c}).\]
Now for each $D$ of size $k+1$, let us set
\[b_{Dj_D}^{\a_{D^c}}(x_{D^c})=\sum_{(A,B)\in\mathcal A_D}a_{((A\cup B)\setminus D)j_{(A\cup B)\setminus D}}^{\a_{(A\cup B)\setminus D}}(x_{(A\cup B)\setminus D})g_{(A\cup B)^cj_{(A\cup B)^c}}^{\a_{(A\cup B)^c}}(x_{(A\cup B)^c}).\]
The result then follows.
\end{proof}

Now let us see what these results and their proofs tell us about the decomposition of $T^{11\dots 1}$.

Note first that if we set $k=1$ and $A=\{i\}$ in Lemma \ref{step1} and if we let $\a_j=1$ if $j\ne i$ and $\a_i=2$, then we obtain a decomposition 
\[b_{ij_i}^{\overline{\a_i}=1}(\overline{x_i})=\sum_{i'\ne i}\sum_{j_{i'}}a_{i'j_{i'}}^{\a_{i'}}(x_{i'})g_{\overline{ii'}j_{\overline{ii'}}}^{\a_{\overline{ii'}}}(x_{\overline{ii'}})\]

It is now time to think about dependencies. Note first from the formula at the end of the proof of Lemma \ref{step1} that $g_{(A\cup B)^cj_{(A\cup B)^c}}^{\a_{(A\cup B)^c}}$ depends on $\a_{A\cap B}$, in the sense that it is partly made out of the functions $a_{ij_i}^{\a_i}$ with $i\in A\cap B$. It is also made out of the functions $b_{Bj_B}^{\a_{B^c}}$, and therefore inherits any dependencies that these might have.

Turning to the formula at the end of the proof of Corollary \ref{inductivestep}, we therefore find that $b_{Dj_D}^{\a_{Dj_D}}$ depends on $\a_E$, where $E=\bigcup_{(A,B)\in\mathcal A_D}(A\cap B)$.

\section{The proof for $d$-tensors}

The generalization to $d$-tensors of the argument just given is fairly straightforward. The two main difficulties are that we need to use a more efficient notation and that the generalizations of Lemma \ref{lowrank} and Corollary \ref{extremecase} are slightly more involved. But neither difficulty is a substantial one, and we will be able to import many of the lemmas from the previous section unchanged into this one.

We begin with the notation. Let $X_1,\dots,X_d$ be finite sets, with each $X_i$ being partitioned into two sets $X_i^1$ and $X_i^2$. Let $T:X_1\times\dots\times X_d\to\F$ be a $d$-tensor, and write $T^{\a_1\dots\a_d}$ either for the projection or for the restriction to $X_1^{\a_1}\times\dots\times X_d^{\a_d}$ -- which it means will be clear from the context. We assume that $T$ is the direct sum $T^{11\dots1}\oplus T^{22\dots2}$ -- that is, that all other $T^{\a_1\dots\a_d}$ are zero. 

Given $(x_1,\dots,x_d)\in X_1\times\dots\times X_d$, let us write $\overline{x_i}$ as shorthand for the sequence $(x_1,\dots,x_{i-1},x_{i+1},\dots,x_d)$. Then our other assumption about $T$ will be that it has a decomposition of the form
\[T(x_1,\dots,x_d)=\sum_{j=1}^{r_1}a_{1j}(x_1)b_{1j}(\overline{x_1})+\dots+\sum_{j=1}^{r_d}a_{dj}(x_d)b_{dj}(\overline{x_d}),\]
and our aim will be to show that $\sigma(T^{11\dots 1})+\sigma(T^{22\dots 2})\leq r_1+\dots+r_d$ by finding suitable decompositions.

A simple application of Lemma \ref{rewriteterm} allows us to assume that for each $i$ the functions $a_{i1},\dots,a_{ir_i}$ are linearly independent. As in the proof for 3-tensors, we shall begin by proving the result under the additional assumption that the functions $a_{i1}^1,\dots,a_{ir_i}^1$ are linearly independent and the functions $a_{i1}^2,\dots,a_{ir_1}^2$ are linearly independent. 

Before we do that, let us observe that Lemma \ref{insubspace} generalizes very straightforwardly. In fact, the case $d=2$ clearly implies the lemma for general $d$, but since we have proved it for $d=3$, we shall slightly artificially use that instead.

\begin{lemma}\label{insubspace2}
Let $d>3$, let $U_1,\dots,U_d$ be vector spaces and let $V_d$ be a subspace of $U_d$. For each $i<d$ let $u_{i1},\dots,u_{r_i}$ be a linearly independent subset of $U_i$. Suppose that we have a linear combination 
\[\sum_{i_1=1}^{r_1}\dots\sum_{i_{d-1}=1}^{r_{d-1}}u_{i_1}\otimes\dots\otimes u_{i_{d-1}}\otimes v_{i_1\dots i_{d-1}}\]
that belongs to the subspace $U_1\otimes\dots\otimes U_{d-1}\otimes V_d$. Then all the vectors $v_{i_1\dots i_{d-1}}$ belong to $V_d$.
\end{lemma}

\begin{proof}
Apply Lemma \ref{insubspace} with $U=U_1\otimes\dots\otimes U_{d-2}$, $V=U_{d-1}$, $W=U_d$, and $W'=V_d$. The hypotheses are easily seen to hold, and therefore so does the conclusion.
\end{proof}

We now prove a result that generalizes Corollary \ref{extremecase}. Its proof will use a generalization of Corollary \ref{lowrank}, but it is less convenient to separate the result into two parts than it was in the previous section.

\begin{lemma} \label{extremecaseford}
With all the assumptions just made, $\sigma(T^{11\dots 1})$ and $\sigma(T^{22\dots 2})$ are both at most $\min_i r_i$.
\end{lemma}

\begin{proof}
Suppose that $\a_1,\dots,\a_d$ are not all the same. Then
\[0=\sum_{j=1}^{r_1}a_{1j}^{\a_1}(x_1)b_{1j}^{\overline{\a_1}}(\overline{x_1})+\dots+\sum_{j=1}^{r_d}a_{dj}^{\a_d}(x_d)b_{dj}^{\overline{\a_d}}(\overline{x_d}).\]
Since the functions $a_{11}^{\a_1},\dots,a_{1r_1}^{\a_1}$ are linearly independent, we can find for each $k\in\{1,2,\dots,r_1\}$ a function $h_k^{(\a_1)}:X_1^{\a_1}\to\F$ such that for every $j\in\{1,2,\dots,r_1\}$ we have that $\sum_{x\in X_1^{\a_1}}h_k^{(\a_1)}(x)a_{1j}^{\a_1}(x)=\d_{kj}$. It follows that 
\[b_{1k}^{\overline{\a_1}}(\overline{x_1})=-\sum_{i=2}^d\sum_{j=1}^{r_i}a_{ij}^{\a_i}(x_i)\sum_{x_1\in X_1^{\a_1}}h_k^{(\a_1)}(x_1)b_{ij}^{\overline{\a_i}}(\overline{x_i}).\]
Note that $\sum_{x_1\in X_1^{\a_1}}h_k^{(\a_1)}(x_1)b_{ij}^{\overline{\a_i}}(\overline{x_i})$ is a function of $x_2,\dots,x_{i-1},x_{i+1},\dots,x_d$ that depends on $\a_1$. 

It follows that if $\a_1,\dots,\a_d$ are not all the same, then for every $k\leq r_1$ there are functions $p_{hjk}^{(\a_1)\overline{\a_1},\overline{\a_h}}$, $h=2,\dots,k$, $j=1,2,\dots,r_h$, such that
\[b_{1k}^{\overline{\a_1}}(\overline{x_1})=\sum_{h=2}^d\sum_{j=1}^{r_h}a_{hj}^{\a_h}(x_h)p_{hjk}^{(\a_1)\overline{\a_1}\overline{\a_h}}(\overline{x_1},\overline{x_h}),\]
where we have extended our notation so that $\overline{\a_1\a_h}$ is shorthand for 
\[\a_2\a_3\dots\a_{h-1}\a_{h+1}\dots\a_d\] 
and $(\overline{x_1},\overline{x_h})$ is shorthand for 
\[(x_2,x_3,\dots,x_{h-1},x_{h+1},\dots,x_d).\] 
More generally, we have for each $b_{ik}^{\overline{\a_i}}$ a decomposition
\[b_{ik}^{\overline{\a_i}}(\overline{x_i})=\sum_{h\ne i}\sum_{j=1}^{r_h}a_{hj}^{\a_h}(x_h)p_{hjk}^{(\a_i)\overline{\a_i}\overline{\a_h}}(\overline{x_i},\overline{x_h}).\]

It follows that for every $\a_1,\dots,\a_d$ that are not all equal, we have the formula
\[T^{\a_1,\dots,\a_d}(x_1,\dots,x_d)=\sum_{i=1}^d\sum_{j=1}^{r_i}\sum_{h\ne i}\sum_{l=1}^{r_h}a_{ij}^{\a_i}(x_i)a_{hl}^{\a_h}(x_h)p_{hjl}^{(\a_i)\overline{\a_i\a_h}}(\overline{x_i},\overline{x_h})\]

The main thing to take away from this formula is that it expresses $T^{\a_1,\dots,\a_d}$ as a sum of products of functions, and each product takes two of the one-variable functions $a_{ij}$ from different classes (that is, it takes $a_{ij}$ and $a_{hl}$ with $i\ne h$) and a further function of all the remaining variables. This is a direct generalization of the statement that immediately follows Lemma \ref{lowrank}, where the further function was also of one variable. Here things are more complicated because the further function is of more than one variable, which means that the argument is not yet finished. 

Note that if $(\a_1,\dots,\a_d)=(1,1,\dots,1)$, then we still have the decomposition
\[b_{ik}^{\overline{\a_i}}(\overline{x_i})=\sum_{h\ne i}\sum_{j=1}^{r_h}a_{hj}^{1}(x_h)p_{hjk}^{(\a_i=2)\overline{\a_i}\overline{\a_h}}(\overline{x_i},\overline{x_h})\]
that comes from considering the sequence $(\be_1,\dots,\be_d)$ where $\be_j=1$ if $j\ne i$ and $\be_i=2$. 

What we shall do now is collect together terms of the same type (that is, ones where the two one-variable functions are the same), so that we have a formula of the kind
\[T^{\a_1,\dots,\a_d}(x_1,\dots,x_d)=\sum_{1\leq i<h\leq d}\sum_{j=1}^{r_i}\sum_{l=1}^{r_h}a_{ij}^{\a_i}(x_i)a_{hl}^{\a_h}(x_h)q_{ihjl}^{(\a_i\a_h)\overline{\a_i\a_h}}(\overline{x_i},\overline{x_h}),\]
which is valid for every choice of $\a_1,\dots,\a_d\in\{1,2\}$ that are not all equal. Here,
\[q_{ihjl}^{(\a_i\a_h)\overline{\a_i\a_h}}(\overline{x_i},\overline{x_h})=p_{hjl}^{(\a_i)\overline{\a_i\a_h}}(\overline{x_i},\overline{x_h})+p_{ijl}^{(\a_h)\overline{\a_i\a_h}}(\overline{x_i},\overline{x_h}).\]
If $\a_1=\dots=\a_d=1$, then we have instead the formula
\[T^{11\dots 1}(x_1,\dots,x_d)=\sum_{1\leq i<h\leq d}\sum_{j=1}^{r_i}\sum_{l=1}^{r_h}a_{ij}^{1}(x_i)a_{hl}^{1}(x_h)q_{ihjl}^{(\a_i=\a_h=2)\overline{\a_i\a_h}}(\overline{x_i},\overline{x_h}),\]
where
\[q_{ihjl}^{(\a_i=\a_h=2)\overline{\a_i\a_h}}(\overline{x_i},\overline{x_h})=p_{hjl}^{(\a_i=2)\overline{\a_i\a_h}}(\overline{x_i},\overline{x_h})+p_{ijl}^{(\a_h=2)\overline{\a_i\a_h}}(\overline{x_i},\overline{x_h}).\]

We have passed from a decomposition into terms that each involve one one-variable function to a decomposition where each term involves two one-variable functions. The idea now is to use a similar process to keep increasing the number of one-variable functions involved in each term until we end up showing that $T^{\a_1\dots\a_d}$ is a linear combination of terms of the form $a_{1i_1}^{\a_1}\otimes\dots\otimes a_{di_d}^{\a_d}$ -- that is, rank-1 tensors formed out of the one-variable functions coming from the original decomposition of $T$. This will prove that each $T^{\a_1\dots\a_d}$ has slice rank at most $\min\{r_1,\dots,r_d\}$, which, as in the 3D case, proves the result under the additional independence assumption.

Let us see how to obtain an expression for $q_{ihjl}^{(\a_i\a_h)\overline{\a_i\a_l}}$ when $\a_1,\dots,\a_d$ are not all the same. The functions $a_{ij}^{\a_i}\otimes a_{hl}^{\a_h}$ are linearly independent, so we can find for each $k\in\{1,2,\dots,r_i\}$ and $m\in\{1,2,\dots,r_h\}$ a function $f_{km}^{\a_i\a_h}:X_i^{\a_i}\otimes X_h^{\a_h}\to\F$ such that for every $j\in\{1,2,\dots,r_i\}$ and $l\in\{1,2,\dots,r_h\}$ we have that
\[\sum_{x_i\in X_i^{\a_i}}\sum_{x_h\in X_h^{\a_h}}f_{km}^{\a_i\a_h}(x_i,x_h)a_{ij}^{\a_i}(x_i)a_{hl}^{\a_h}(x_h)=\d_{kj}\d_{ml}.\]
It follows that 
\[\sum_{x_i\in X_i^{\a_i}}\sum_{x_h\in X_h^{\a_h}}f_{km}^{\a_i\a_h}(x_i,x_h)T^{\a_1\dots\a_d}(x_1,\dots,x_d)\]
is equal to $q_{ihkm}^{(\a_i\a_h)\overline{\a_i\a_h}}(\overline{x_i},\overline{x_h})$ plus a sum of further terms, each of which is of the form
\[\sum_{x_i\in X_i^{\a_i}}\sum_{x_h\in X_h^{\a_h}}f_{km}^{\a_i\a_h}(x_i,x_h)a_{uj}^{\a_u}(x_u)a_{vl}^{\a_v}(x_v)q_{uvjl}^{(\a_u\a_v)\overline{\a_u\a_v}}(\overline{x_u},\overline{x_v}),\]
where $u\ne v$ and the pair $\{u,v\}$ is not equal to the pair $\{i,h\}$. 

If $\{u,v\}$ and $\{i,h\}$ are disjoint, then the above term is equal to 
\[a_{uj}^{\a_u}(x_u)a_{vl}^{\a_v}(x_v)s_{ihuvkmjl}^{(\a_i\a_h\a_u\a_v)\overline{\a_i\a_h\a_u\a_v}}(\overline{x_i},\overline{x_h},\overline{x_u},\overline{x_v}),\]
where
\[s_{ihuvkmjl}^{(\a_i\a_h\a_u\a_v)\overline{\a_i\a_h\a_u\a_v}}(\overline{x_i},\overline{x_h},\overline{x_u},\overline{x_v})=\sum_{x_i\in X_i^{\a_i}}\sum_{x_h\in X_h^{\a_h}}f_{km}^{\a_i\a_h}(x_i,x_h)q_{uvjl}^{(\a_u\a_v)\overline{\a_u\a_v}}(\overline{x_u},\overline{x_v}).\]
If, say, $u\ne i$ but $v=h$, then the term is equal to
\[a_{uj}^{\a_u}(x_u)r_{ihukmj}^{(\a_i\a_h\a_u)\overline{\a_i\a_h\a_u}}(\overline{x_i},\overline{x_h},\overline{x_u}),\]
where
\[r_{ihukmj}^{(\a_i\a_h\a_u)\overline{\a_i\a_h\a_u}}(\overline{x_i},\overline{x_h},\overline{x_u})=\sum_{x_i\in X_i^{\a_i}}\sum_{x_h\in X_h^{\a_h}}f_{km}^{\a_i\a_h}(x_i,x_h)a_{hl}^{\a_h}(x_h)q_{uhjl}^{(\a_u\a_h)\overline{\a_u\a_h}}(\overline{x_u},\overline{x_h})\]

Such terms are of two types, according to whether $\{u,v\}$ and $\{i,h\}$ intersect or are disjoint. In order to understand them without too many indices, let us consider the two expressions
\[\sum_{x_1\in X_1}\sum_{x_2\in X_2}f(x_1,x_2)a(x_2)a'(x_3)b(x_1,x_4,x_5,\dots,x_d)\]
and
\[\sum_{x_1\in X_1}\sum_{x_2\in X_2}f(x_1,x_2)a'(x_3)a''(x_4)b(x_1,x_2,x_5,x_6,\dots,x_d).\]
The first takes the form 
\[a'(x_3)c(x_4,x_5,\dots,x_d)\]
and the second takes the form
\[a'(x_3)a''(x_4)c(x_5,x_6,\dots,x_d).\]
But actually the second form is a special case of the first, so in both cases we have $a'(x_3)$ multiplied by a function of $(x_4,x_5,\dots,x_d)$. 

It follows that each of the further terms mentioned above is of the form $a_{uj}^{\a_u}(x_u)$ multiplied by a function of $(\overline{x_i},\overline{x_h},\overline{x_u})$ (where this $u$ could be either the $u$ or the $v$ from the expression above and the function may depend on $\a_i$, $\a_h$, and $\a_u$). That is, we end up with an expression of the form
\begin{align*}&T^{\a_1\dots\a_d}(x_1,\dots,x_d)\\
&=\sum_{1<i<h<u\leq d}\sum_{j=1}^{r_i}\sum_{l=1}^{r_h}\sum_{m=1}^{r_u}a_{ij}^{\a_i}(x_i)a_{hl}^{\a_h}(x_h)a_{um}^{\a_m}(x_u)r_{ihujlm}^{(\a_i\a_h\a_u)\overline{\a_i\a_h\a_u}}(\overline{x_i},\overline{x_h},\overline{x_u}).\\
\end{align*}

Repeating this argument we end up with an expression for $T^{\a_1\dots\a_n}$ of the form
\[\sum_{i=1}^d\sum_{j_1=1}^{r_1}\dots\sum_{j_{i-1}=1}^{r_{i-1}}\sum_{j_{i+1}=1}^{r_{i+1}}\dots\sum_{j_d=1}^{r_d}a_{1j_1}^{\a_1}\otimes\dots\otimes a_{i-1,j_{i-1}}^{\a_{i-1}}\otimes p_{{\overline{i}}\,{\overline{j_i}}}^{(\overline{\a_i})\a_i}\otimes a_{i+1,j_{i+1}}^{\a_{i+1}}\otimes\dots\otimes\a_{dj_d}^{\a_d}.\]
Here, $p_{{\overline{i}}\,{\overline{j_i}}}^{(\overline{\a_i})\a_i}$ is shorthand for $p_{12\dots (i-1)(i+1)\dots d\ j_1\dots j_{i-1}j_{i+1}\dots j_d}^{(\a_1\dots\a_{i-1}\a_{i+1}\dots\a_d)\ \a_i}$.

By Lemma \ref{insubspace2} we have that $p_{{\overline{i}}\,{\overline{j_i}}}^{(\overline{\a_i})\a_i}$ belongs to the subspace spanned by the functions $a_{ij_i}^{\a_i}$, still under the assumption that the $\a_i$ are not all the same.

\end{proof}


The deduction of the general case from this special case works in exactly the same way that it worked for 3-tensors. We sketch the argument very briefly.

\begin{proof}[Proof of Theorem \ref{main}]
Suppose without loss of generality that $a_{11}^2=0$. Let $P$ and $Q$ be as in the proof when $d=3$. Then as in that proof, $PT$ is of the form 
\[PT(x_1,\dots,x_d)=a_{11}^1(x_1)b(x_2,\dots,x_d),\]
and is supported in $X_1^1\times\dots\times X_d^1$. Also, 
\[QT(x_1,\dots,x_d)=\sum_{j=1}^{r_1}Qa_{1j}(x_1)b_{1j}(\overline{x_1})+\dots+\sum_{j=1}^{r_d}a_{dj}(x_d)Qb_{dj}(\overline{x_d}),\]
and since $Qa_{11}=0$, it follows that $QT$ is a direct sum of slice rank at most $(r_1-1)+r_2+\dots+r_d$. We are therefore done by induction.
\end{proof}
\fi

\section{Further remarks and questions}

There are other basic statements about matrix rank that do not generalize to slice rank for higher-degree tensors. For instance, it is not true in general that $\sigma(S\otimes T)=\sigma(S)\sigma(T)$. Indeed, if one takes three reasonably generic $n\times n\times n$ slice-rank-1 tensors with slices in different directions -- that is, of the kind $a(x)b(y,z)$, $c(y)d(x,z)$, and $e(z)f(x,y)$ -- then their tensor product will tend to have large slice rank. For instance, if $a, e$ and $f$ are all equal to the standard basis vector $e_1$ and $b, d$ and $f$ are all equal to the identity matrix, then the tensor product of the three tensors is equivalent to the so-called matrix multiplication tensor, which has rank $n^2$ (see \cite[Remark 4.9]{BCCGNSU}). And for an example in the other direction, if $T:\F_3^3\to\F_3$ is the characteristic function of the set $\{(x,y,z)\in\F_3^3:x+y+z=0\}$, then it has slice rank 3. (To see this, observe that if not, then it has a decomposition into two functions of slice rank 1, so without loss of generality there is no function of type $e(z)f(x,y)$ involved in the decomposition. But if we then fix $z$, we obtain a matrix of rank 2, but it is also a permutation matrix so it has rank 3, a contradiction.) However, the $n$th tensor power of $T$ can be thought of as the characteristic function of the set $\{(x,y,z)\in(\F_3^n)^3:x+y+z=0\}$, which, as the polynomial method shows, has slice rank exponentially smaller than $3^n$.

A special case of Theorem \ref{main} is that $\sigma(S\otimes T)=\sigma(S)\sigma(T)$ when $S$ is a diagonal tensor, so we obtain equality for this case, but we know in advance that the argument cannot be simple enough to generalize to all tensor products. 

Another related question is a long-standing conjecture of Strassen that tensor rank was additive for direct sums, which, despite being true in a number of special cases, was eventually \emph{dis}proved by Shitov in 2017, who found a highly non-obvious counterexample \cite{shitov}. 

We conclude with three questions. The first is whether there is a simultaneous generalization of the main theorem of this paper and of the result of Sawin and Tao mentioned earlier. To make this question more precise, suppose that $X_i$ is partitioned into sets $X_{i1},\dots,X_{ir_i}$ for each $i$. Define the \emph{block support} of a tensor $T:X_1\times\dots\times X_d$ to be the set of $(j_1,\dots,j_d)$ such that $T$ restricted to the block $X_{1j_1}\times\dots\times X_{dj_d}$ is not identically zero. Define a \emph{block slice} of $T$ to be the restriction of $T$ to a set of the form
\[X_1\times\dots\times X_{h-1}\times X_{hj}\times X_{h+1}\times\dots\times X_d.\]
Call a block  $X_{1j_1}\times\dots\times X_{dj_d}$ \emph{maximal} if $(j_1,\dots,j_d)$ is a maximal element of the block support.

If the non-zero blocks of $T$ are covered by some set of block slices, it is trivial that the slice rank of $T$ is at most the sum of the slice ranks of those block slices. However, sometimes we can improve on this bound. For instance, suppose that the block support of a 3-tensor $T$ is contained in three planes, and contains the intersection of those three planes. Suppose also that the block corresponding to that intersection has high slice rank $r$, and that if that block is removed, then the three block slices have small slice rank $s$. With a suitable example like this, one can arrange that the sum of the slice ranks of block slices that cover the non-zero blocks is minimized in the obvious way, which gives an upper bound of at least $3r$. But one can obtain a better upper bound of $r+3s$ by first decomposing the block at the intersection and then decomposing the rest of the slices.

With that example in mind, let us define a \emph{partial block slice} to be the restriction of $T$ to a union of blocks that forms a subset of a block slice. 

\begin{question}Let $T$ be a $d$-tensor as above and let $S$ be its block support. Does it follow that the slice rank of $T$ is at least the minimum of the sum of the slice ranks of a set of partial block slices that cover all the maximal blocks of $T$?
\end{question}

A positive answer to that question may be too much to hope for, in which case a much weaker preliminary question one might ask is whether if all non-zero blocks have slice rank at least $r$, and if $m$ block slices are needed to cover the maximal blocks, then the slice rank of $T$ is at least $mr$. 
\smallskip

Another obvious question is the following.
\begin{question} Is partition rank additive for direct sums?
\end{question}
It seems reasonable to guess that the answer is no, since the proof just given for slice rank appears to fail quite badly. But that is a pure guess, and it might not be a simple matter to find a counterexample. Naslund showed that if an appropriate extra step is added to Tao's proof of Lemma \ref{diagonal}, then it can be made to yield the stronger result that the partition rank of a diagonal tensor is also equal to the number of non-zero entries \cite{naslund}, so diagonal tensors do not give counterexamples. 

Finally, we ask a more open-ended question.
\begin{question} Does Theorem \ref{main} have any interesting combinatorial applications?
\end{question}
The answer to this is not obvious, given that up to now combinatorial applications have tended to be of the result for diagonal tensors (that is, of Lemma~\ref{diagonal}). 

We do not have a promising suggestion for how to apply the result, but can at least point out one constraint on what a genuine application would need to look like. Suppose that $T_1,\dots,T_m$ are tensors and that the result of Sawin and Tao can be used to show that $\sigma(T_i)\geq r_i$. It then follows easily that $\sigma(T_1\oplus\dots\oplus T_m)\geq r_1+\dots+r_m$. (We observed this in the introduction in the special case where $T_1=\dots=T_m=\e$.) Therefore, an application of the main result of this paper would have to be to tensors $T_1,\dots,T_m$ to which the approach of Sawin and Tao does not apply, which in practice, given the current state of knowledge, means tensors for which we probably do not know how to calculate their slice rank. 

That refers to applications that use direct sums of \emph{specific} tensors. Another possibility might be an argument in which tensors $T_1,\dots,T_m$ are defined in terms of some unknown objects (such as subsets of a finite group) that satisfy certain hypotheses that are used to derive lower bounds for the slice ranks $\sigma(T_i)$. However, for the result of this paper to be used in an essential way, there would still be constraints on the nature of the derivation.

Just before this result was posted, an interesting preprint appeared by Sauermann, who for the first time proved a combinatorial result using a lower bound for the slice rank of a non-diagonal tensor \cite{sauermann}: to obtain the lower bound she relied on the approach of Sawin and Tao. That at least suggests that there is value in extending the known methods for calculating slice rank.

\end{document}